\documentclass[10pt,a4paper]{article}
\usepackage{amsmath}
\usepackage{amsfonts}
\usepackage[latin1]{inputenc}
\usepackage{amsmath}
\usepackage{amsthm}
\usepackage{amsfonts}
\usepackage{amssymb}

\textheight 24cm \textwidth 16.3cm
\oddsidemargin 0pt \evensidemargin 0pt
\topmargin 1pt \headheight 0pt \headsep 0pt

\newtheorem{thm}{Theorem}[section]
\newtheorem{co}[thm]{Corollary}

\newtheorem{lem}[thm]{Lemma}
\numberwithin{equation}{section}

\renewcommand{\P}{\mathbb{P}}
\newcommand{\E}{\mathbb{E}}

\newcommand{\T}{\mathbb{T}}

\begin{document}
\title{Moments for  multi-dimensional Mandelbrot's cascades }
\author{Chunmao HUANG
\footnote{
Email addresses: cmhuang@hitwh.edu.cn (C. Huang) .}
\\
\small{\emph {Harbin institute of technology at Weihai, Department of mathematics, 264209, Weihai, China}}}

 \date{}

\maketitle

\begin{abstract}
We  consider the distributional equation
$\textbf{Z}\stackrel{d}{=}\sum_{k=1}^N\textbf{A}_k\textbf{Z}(k)
$,
where $N$ is a random variable taking value in $\mathbb N_0=\{0,1,\cdots\}$,
$\textbf{A}_1,\textbf{A}_2,\cdots$ are $p\times p$ non-negative random matrix, and
$\textbf{Z},\textbf{Z}(1),\textbf{Z}(2),\cdots$ are $i.i.d$ random
vectors in in $\mathbb{R}_+^p$ with  $\mathbb{R}_+=[0,\infty)$, which are
independent of $(N,\textbf{A}_1,\textbf{A}_2,\cdots)$.
Let $\{\mathbf Y_n\}$ be the multi-dimensional Mandelbrot's martingale defined as sums of products of random matrixes indexed by nodes of a Galton-Watson tree plus an appropriate vector. Its limit $\mathbf Y$ is a solution of the equation above. For $\alpha>1$, we show respectively a sufficient condition and a necessary condition  for $\E\|\mathbf Y\|^\alpha\in(0,\infty)$. Then for a non-degenerate solution $\mathbf Z$ of the equation above, we show the decay rates of $\E e^{-\mathbf t\cdot \mathbf Z}$ as $\|\mathbf t\|\rightarrow\infty$ and those of the tail probability $\mathbb P(\mathbf y\cdot \mathbf Z\leq x)$ as $x\rightarrow 0$ for given $\mathbf y=(y^1,\cdots,y^p)\in \mathbb R_{+}^p$,  and the existence of the harmonic moments of $\mathbf y\cdot \mathbf Z$. As application, these above results  about the moments (of positive and negative orders) of $\mathbf Y$ are applied to a special multitype branching random walk. Moreover, for the case where all the vectors and matrixes of the equation above are complex,
a sufficient condition for the $L^\alpha$ convergence and  the $\alpha$th-moment of the Mandelbrot's martingale $\{\mathbf Y_n\}$ is also established.\\

\emph{Key words:} moments, harmonic moments, Mandelbrot's martingales, multiplicative cascades, multi-branching random walks

\emph{AMS subject classification:} 60K37, 60J80

\end{abstract}



\section {Introduction}
We consider a multi-dimensional  Mandelbrot's martingale $\{\mathbf Y_{n}\}$ defined as sums of products of random matrixes (weights) indexed by nodes of a Galton-Watson tree plus an appropriate vector. We are interested in the existence of the moments of positive and negative  orders of its limit $\mathbf Y$. For the one-dimensional case,  the classical model of Mandelbrot \cite{Mandelbrot74} corresponds to the case where the tree is a fixed $r$-ary tree ($r\geq 2$ being a constant), and all the weights are one-dimensional random variables. This classical model and its variations were studied by many authors in different contexts, see for example:  Bingham \& Doney \cite{Bingham74, Bingham75} for branching processes and general age-dependent branching processes;   Kahane \& Peyri\`{e}re \cite{Kahane76}, Guivarc'h \cite{Guivarch90} and Barral \cite{Barral99} for  multiplicative cascades;   Biggins \cite{Biggins77}  and Biggins \& Kyprianou \cite{Biggins97} for branching random walks;   Durrett \& Liggett \cite{Durrett83} for some infinite particle systems;  R\"osler \cite{Rosler92} for the Quicksort algorithm. A general one-dimensional model (called Mandelbrot's cascades) which  unifies the study of cascades  and  branching random walks  was presented by Liu \cite{liu2000}, where a number of applications were shown. The model considered here is a generalization of the model presented in \cite{liu2000} to the multi-dimensional case. Similar to the  one-dimensional case, our model is also  corresponding to  multi-type branching random walks which  attract some authors' attention recently, see for example Kyprianou \& Rahimzadeh Sani \cite{ks}, Biggins \& Rahimzadeh Sani \cite{bs} and Biggins \cite{b12}. This paper is our first exploration to  multi-dimensional Mandelbrot's cascades. Considering the practicability, we choose to begin with the existence of the moments of $\mathbf Y$, which are useful to study the asymptotic properties   of $\{\mathbf Y_{n}\}$.

\bigskip

Let's present our model and problems. We consider the distributional equation of $\textbf{Z}$:
\begin{equation}\tag{E}\label{E}
\textbf{Z}\stackrel{d}{=}\sum_{k=1}^N\textbf{A}_k\textbf{Z}(k),
\end{equation}
where $N$ is a random variable taking value in $\mathbb N_0=\{0,1,\cdots\}$,
$\textbf{A}_1,\textbf{A}_2,\cdots$ are $p\times p$ non-negative random matrix ;
$\textbf{Z},\textbf{Z}(1),\textbf{Z}(2),\cdots$, which are
independent of $(N,\textbf{A}_1,\textbf{A}_2,\cdots)$, are $i.i.d$ random
vectors in $\mathbb{R}_+^p$ with  $\mathbb{R}_+=[0,\infty)$.

We say a matrix $\mathbf{A}$ is \emph{finite} if all entries of $\mathbf A$ are finite, and
say $\mathbf A$ is \emph{strictly positive} if for some positive integer $n$, all
entries of $\mathbf A^n$ are positive. When a matrix $\mathbf A$ is finite
and strictly positive, the Perron-Frobeninius theorem shows that $\mathbf A$
has a positive maximal eigenvalue $\rho$ and has associated
positive right and left eigenvectors
$\mathbf{v}=(v_1,\cdots,v_p)$ and
$\mathbf{u}=(u_1,\cdots,u_p)$. Moreover, $\mathbf{u}$,
$\mathbf{v}$ can be normalized so that
$\sum\limits_{i=1}^{p}u_i=\sum\limits_{i=1}^{p}u_iv_i=1$. Throughout this paper,
we assume  that\\
\\*
\textbf{Assumption (H)}. The matrix \emph{$\textbf{M}:=\mathbb E\sum\limits_{k=1}^N\textbf{A}_k$ is finite and strictly
positive with the maximum-modulus eigenvalue $1$ and the
corresponding left and right eigenvectors $\textbf{U}=(U_1,\cdots,U_p),\textbf{V}=(V_1,\cdots,V_p)$ normalized such
that $\sum\limits_{i=1}^pU_i=\sum\limits_{i=1}^pU_iV_i=1$.}
\\*

We are interested in the existence of the
 solution with $\alpha$th-moment ($\alpha>1$) of  the equation (\ref{E}), and furthermore, the existence of its harmonic moments.
It is clear that there exists a solution of  equation (\ref{E}). In fact, we can construct a solution (denoted by $\mathbf Y$) in the following way. 
Let $\mathbb N=\{1,2, \cdots\}$ and write
\begin{equation*}
I=\bigcup_{n=0}^{\infty }\mathbb{N}^{n}
\end{equation*}
for the set of all finite sequences $u=u_1\cdots u_n$ with $u_i\in \mathbb N$, where by convention ${\mathbb{N}}^{0}=\{\emptyset \}$ contains the null sequence $\emptyset $. If $u=u_1\cdots u_n\in I$, we write $|u|=n$ for the length of $u$; if  $u=u_1\cdots u_n, v=v_1\cdots v_m\in I$, we write $uv=u=u_1\cdots u_n v_1\cdots v_m$ for the sequence obtained by juxtaposition. In particular, $u\emptyset=\emptyset u=u$. We partially order $I$ by writing $u\le v$ to mean that for some $u'\in I$, $v=uu'$, and by writing $u<v$ to mean that $u\le v$ and $u\ne v$.

Let $\{(N_u,\textbf{A}_{u1},\textbf{A}_{u2},\cdots)\}$ be a family
of independent copies of $(N,\textbf{A}_1,\textbf{A}_2,\cdots)$,
indexed by all the finite sequence $u\in I$. For simplicity, we write $(N, \mathbf A_{1}, \mathbf A_{2},\cdots )$ for $(N_{\emptyset
},\textbf{A}_{\emptyset 1},\textbf{A}_{\emptyset 2},\cdots )$. Let $\mathbb{T}$ be the Galton-Watson tree with
defining elements ($N_{u}$) ($u\in I$): (i) $\emptyset \in \mathbb{T}$; (ii) if $u\in \mathbb{T}$, then $uk\in \mathbb{T}$ if and only if $1\leq k\leq N_{u}$; (iii) if $u k\in \mathbb{T}$, then $u\in \mathbb{T}$.  Here the null sequence $\emptyset $ is the root of the tree $\mathbb{T}$, which can be regarded as
the initial particle; $u k$ represents the $k$-th child of $u$; $N_{u}$ represents the number of offspring of the particle $u$.

Each node of the tree $\mathbb{T}$ is marked with the random vector $(N_u,\mathbf{A}_{u1},\textbf{A}_{u2},\cdots)$. We can imagine that the random matrix $\textbf{A}_{u k}$ is  the  "weight" associated with the edge $(u,uk)$ linking the nodes $u$ and $u k$ if $u\in \mathbb{T}$ and $1\leq k\leq N_{u}$; the values $\textbf{A}_{u k}$ for $k>N_{u}$ are of no influence for our purpose, and will be taken as $0$ for convenience.

Let $\mathbb{T}_{n}=\{u\in\T: |u|=n\}$ be the set
of sequence $u$ in $\mathbb{T}$ with length $|u|=n$. Put
\begin{equation*}
 \mathbf X_{u}=\mathbf A_{u_{1}}\mathbf A_{u_{1}u_{2}}\cdots
\mathbf A_{u_{1}\cdots u_{n}}\quad \mathrm{ if }\
u=u_{1} \ldots u_{n}\in I \quad \mathrm{ for } \;  n\geq 1,
\end{equation*}
and define
\begin{equation}\label{beqYn}
\mathbf Y_{0}=\mathbf V  \quad \mbox{ and }\quad  \mathbf Y_{n}=\sum_{u\in \mathbb{T}_{n}} \mathbf X_{u}\mathbf V  \quad \mathrm{ for } \;
n\geq 1.
\end{equation}
It is not difficult to  verify  that $\textbf{Y}_n=(Y_{n,1},\cdots,Y_{n,p})$ is a non-negative martingale with respect to the filtration
$$\mathcal {F}_n=\sigma((N_{u},\mathbf A_{u1},\mathbf A_{u2},\cdots ):|u|<n),$$
the $\sigma$-field that contains
all information up to generation $n$. We call $\{\textbf{Y}_n\}$ {\em Multi-dimensional Mandelbrot's martingale}. It reduce to the classical Mandelbrot's martingale when the dimension $p=1$. Clearly, there exists a non-negative random vector $\mathbf Y=(Y_1, \cdots, Y_p)\in \mathbb R_+^p$ such that
$$\mathbf Y=\lim_{n\rightarrow \infty }\mathbf Y_{n}$$
almost surely (a.s.) with $\mathbb EY_i\leq V_i$ for all $1\leq i\leq p$ by Fatou's lemma. Notice that
\begin{eqnarray}\label{MCE1}
\mathbf Y_n
=\sum_{u\in \mathbb{T}_{1}}\mathbf A_{u}\mathbf Y_{n-1}(u),
\end{eqnarray}
where  $\{\mathbf Y_n(u)\}$ ($u\in\T_k$) are independent copies of $\mathbf Y_n$ and they are independent of $\mathcal F_k$. Denote $\mathbf Y(u)=\lim\limits_{n\rightarrow\infty}\mathbf Y_n(u)$. Letting $n\rightarrow\infty$ in (\ref{MCE1}), we have
\begin{equation}\label{MCE2}
\textbf{Y}{=}\sum_{k=1}^N\textbf{A}_k\textbf{Y}(k),
\end{equation}
which means that $\mathbf Y$ is a solution of the
equation (\ref{E}).

\bigskip

\paragraph{Example 1.1 Multitype branching random walk (MBRW) } A multitype branching random walk (MBRW) with $p$ types
defined as follows. A single particle $\emptyset$, of type
$i\in\{1,2,\cdots, p\}$ is located at the origin of real line $\mathbb{R}$. It
gives birth to children of the first generation, which are scattered
on $\mathbb{R}$, according to a vector point process
$\mathbf{L}_i=(L_{i1},L_{i2},\cdots,L_{ip})$, where $L_{ij}$ is the
point process counting the number of particles of type
$j\in\{1,2,\cdots, p\}$ born to the particle of type $i$. These
particles of the first generation  reproduce particles to form the
second generation. The displacements of the offsprings of a particle
of type $j$, relative to their parent's position, are given by the
point process $\mathbf{L}_j$. These particles of the second
generation reproduce children to form the next generation, and so
on. All particles behave independently. We denote the position of a
particle $u$ by $S_u$ and the type of $u$ by $\tau (u)$ , then the position of $uk$, the $k$-th child
of $u$ satisfies
$$S_{uk}=S_u+l_{uk},$$
where $l_{uk}$ denotes the displacement of $uk$ relative to $u$ whose distribution is determined by $L_{\tau(u)\tau(uk)}$.

Assume that $N_i:=\sum\limits_{j=1}^pZ_{ij}(\mathbb R)$ has the same distribution for all $1\leq i\leq p$, which means that all particles produce offspring according to the same distribution if we don't care the type. Under this assumption,  all particles $u\in I$ associated with the numbers of their offspring $N_u$ form a Galton-Watson tree $\mathbb T$ described above. We remark that this assumption is not necessary in a usual MBRW, so the example presented here is just a special case of MBRW. For more information and more results about the usual MBRW, cf \cite {bs, b12, ks}.

For $t\in \mathbb{R}$, define the matrix $\tilde {\mathbf  M}(t)=(\tilde M_{ij}(t))$ as
$$
\tilde M_{ij}(t)=\sum_{\substack{u\in\mathbb T_1\\\tau(u)=j}}e^{-tS_u}\qquad\text{( $\tau(\emptyset)=i$).}
$$
Assume that  $\tilde{\mathbf M}(t)$ defined above is finite
and strictly positive.  Denote the positive maximal eigenvalue  of $\tilde{\mathbf M}(t)$ by $\tilde \rho(t)$ and the associated normalized positive left and right eigenvectors by $\tilde{\mathbf U}(t)=(\tilde U_1(t),\cdots, \tilde U_p(t))$ and $\tilde{\mathbf V}(t)=\tilde V_1(t),\cdots, \tilde V_p(t))$ respectively. For each $i=1,2,\cdots, p$, let
\begin{equation*}
W_{n,i}(t):=\frac{\sum\limits_{u\in\T_n}\tilde V_{\tau(u)}(t)e^{-tS_u}}{\tilde V_i(t)\tilde \rho(t)^n}\qquad (\tau(\emptyset)=i).
\end{equation*}
It is known that for each $i=1,2,\cdots, p$, $\{W_{n,i}(t)\}$ forms a non-negative martingale with mean one, hence it converges a.s. to a non-negative random variable $W^i(t)$ with $\mathbb EW_i(t)\leq1$. Write
$$\mathbf Y_n=\left(W_{n,1}(t)\tilde V_1(t),\;\;W_{n,2}(t)\tilde V_2(t),\;\;\cdots\;\;, W_{n,p}(t)\tilde V_p(t)\right).$$
We can see that the martingale $\{\mathbf Y_n\}$ is just the Mandelbrot's martingale defined in (\ref{beqYn}) if we put the random matrix $\mathbf A_k=((\mathbf A_k)_{ij})$, where
$$(\mathbf A_k)_{ij}=\frac{e^{-tS_k}}{\tilde \rho(t)}\mathbf{1}_{\{\tau(k)=j\}}\qquad (\tau(\emptyset)=i).$$
Indeed, with $\mathbf A_k$, we have $\mathbf M=\frac{\tilde{\mathbf M}(t)}{\tilde \rho(t)}$, so that $\mathbf V=\tilde{\mathbf V}(t)$. Notice that for $u=u_1\cdots u_n$,
$$(\mathbf A_{u_{1}\cdots u_{n}})_{ij}=\frac{e^{-tS_u}}{\tilde \rho(t)^n}\mathbf{1}_{\{\tau(u)=j\}}\qquad (\tau(\emptyset)=i).$$
Thus by (\ref{beqYn}), for each $i=1,2,\cdots, p$, with $\tau(\emptyset)=i$,
$$\mathbf Y_{n,i}=\sum_{u\in \mathbb{T}_{n}}\frac{e^{-tS_u}}{\tilde \rho(t)^n}\tilde V_{\tau(u)}(t)
=W_{n,i}(t)\tilde V_i(t)$$
Therefore, the limit of $\mathbf Y_n$, namely,
$$\mathbf Y=\left(W_1(t)\tilde V_1(t),\;\;W_2(t)\tilde V_2(t),\;\;\cdots\;\;, W_p(t)\tilde V_p(t)\right)$$
satisfies (\ref{MCE2}).


\section{Main results}

Let $\mathbf Y$ be the limit of the Mandelbrot's martingale $\{\mathbf Y_n\}$.
We first discuss the existence of the
$\alpha$th-moment ($\alpha>1$) of $\textbf{Y}$, which implies its non-degeneracy.

For $t\in\mathbb{R}$ fixed,
define the random matrix $\textbf{A}_k^{(t)}=((\textbf{A}_k^{(t)})_{ij})$ as $(\textbf{A}_k^{(t)})_{ij}:=[(\textbf{A}_k)_{ij}]^t$. Let
$$\mathbf M{(t)}:=\mathbb E \sum_{k=1}^N \mathbf A_k^{(t)}.$$
When $\mathbf M{(t)}$ is finite
and strictly positive, we denote the its positive maximal eigenvalue  by $ \rho(t)$ and the corresponding positive left and right eigenvectors by ${\mathbf U}(t)=( U_1(t),\cdots,  U_p(t))$ and ${\mathbf V}(t)= (V_1(t),\cdots,  V_p(t))$ normalized such
that $\sum\limits_{i=1}^pU_i(t)=\sum\limits_{i=1}^pU_i(t)V_i(t)=1$. Define
\begin{equation*}
 \mathbf X_{u}^{(t)}=\mathbf A_{u_{1}}^{(t)}\mathbf A_{u_{1}u_{2}}^{(t)}\cdots
\mathbf A_{u_{1}\cdots u_{n}}^{(t)}\quad \mathrm{ if }\
u=u_{1} \ldots u_{n}\in I \quad \mathrm{ for } \;  n\geq 1,
\end{equation*}
\begin{equation}\label{beqYn1}
\mathbf Y_{0}^{(t)}=\mathbf V(t) \quad \mbox{ and }\quad  \mathbf Y_{n}^{(t)}=\sum_{u\in \mathbb{T}_{n}} \mathbf X_{u}^{(t)}\mathbf V(t)\quad \mathrm{ for } \;
n\geq 1.
\end{equation}
Clearly, $\textbf{Y}_n^{(t)}=(Y_{n,1}^{(t)},\cdots,Y_{n,p}^{(t)})$ is  a non-negative martingale with mean $\textbf{V}(t)$,
so it converges a.s. to a random vector $\textbf{Y}^{(t)}=(Y^{(t)}_1,\cdots,Y^{(t)}_p)$. In particular, when $t=1$,
we have $\textbf{X}_u^{(1)}=\textbf{X}_u$, $\rho(1)=1$ and $\textbf{V}(1)=\textbf{V}$, hence $\textbf{Y}_n^{(1)}=\textbf{Y}_n$
and $\mathbf Y^{(1)}=\mathbf Y$.

Further more, define the matrix
$\textbf{M}_n(t)=((\textbf{M}_n(t))_{ij})$ as $$(\textbf{M}_n(t))_{ij}:=\mathbb E\sum_{u\in \T_n}[(\textbf{X}_u)_{ij}]^t$$
with the maximum-modulus eigenvalue denoted by $\rho_n(t)$ and the
corresponding normalized positive left and right eigenvectors by $\textbf{U}_n(t)=(U_{n,1}(t)\cdots,U_{n,p}(t))$ and $\textbf{V}_n(t)=(V_{n,1}(t),\cdots,V_{n,p}(t))$.
 In particular, $\rho_1(t)=\rho(t)$.

\bigskip

We declare that throughout this paper the notation norm $\|\mathbf A\|$ represents any one of the matrix norms if $\mathbf A$ is a matrix, and $\|\mathbf u\|=\sum\limits_{j=1}^p|u_j|$  is the $L^1$-norm of $\mathbf u=(u_1, \cdots, u_p)$ if $\mathbf u$ is a vector.

\begin{thm}[Moments]\label{MCT1}
 Let $\alpha>1$.
 \begin{itemize}

 \item[(a)]If $\mathbb E\|\sum\limits_{k=1}^N\textbf{A}_k\|^\alpha<\infty$ and $p^{(\alpha-1)}\rho_n(\alpha)<1$
for some positive integer $n$, then
$$0<\mathbb E\|\mathbf Y\|^\alpha<\infty\qquad \text{and}\qquad \mathbb E\mathbf Y=\mathbf V.$$

\item[(b)]
 Conversely, if $0<\mathbb E\|\mathbf Y\|^\alpha<\infty$, then $\mathbb E\|\sum\limits_{k=1}^N\textbf{A}_k\|^\alpha<\infty$ and $\rho_n(\alpha)\leq 1$ for all $n$. If additionally
\begin{equation}\label{MCEC2}
\text{$\mathbb P(\text{$\forall k\in\{1, 2,\cdots, N\}$, $\mathbf A_k$ has a positive column vector})>0$,}
\end{equation}
then  $\rho_n(\alpha)<1$ for all $n$.

\end{itemize}
\end{thm}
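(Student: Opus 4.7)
The plan is to control $\mathbb E\|\mathbf Y_n\|^\alpha$ uniformly in $n$ and then transfer the bound to the almost-sure limit $\mathbf Y$. For part (a), I would iterate the self-similar recursion at the fixed level $n$ for which $p^{\alpha-1}\rho_n(\alpha)<1$. Writing
\[
\mathbf Y_{n+m}-\mathbf Y_n=\sum_{u\in\mathbb T_n}\mathbf X_u\bigl(\mathbf Y_m(u)-\mathbf V\bigr),
\]
where $\{\mathbf Y_m(u)\}_{u\in\mathbb T_n}$ are i.i.d.\ copies of $\mathbf Y_m$ independent of $\mathcal F_n$, the right-hand side is, conditionally on $\mathcal F_n$, a sum of independent mean-zero vectors. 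I would apply a Topchii--Vatutin / Burkholder--Rosenthal-type moment inequality coordinate by coordinate, combined with the elementary bound $(\sum_{j=1}^p a_j)^\alpha\le p^{\alpha-1}\sum_{j=1}^p a_j^\alpha$ (which is precisely the source of the factor $p^{\alpha-1}$) and the identity $\mathbb E\sum_{u\in\mathbb T_n}[(\mathbf X_u)_{ij}]^\alpha=(\mathbf M_n(\alpha))_{ij}$. Testing the resulting componentwise bound against the positive left Perron eigenvector $\mathbf U_n(\alpha)$ extracts $\rho_n(\alpha)$ and yields a schematic recursion $\mathbb E\|\mathbf Y_{n+m}\|^\alpha\le C+p^{\alpha-1}\rho_n(\alpha)\,\mathbb E\|\mathbf Y_m\|^\alpha$. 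Iterating along multiples of $n$ and invoking the hypothesis forces $\sup_k\mathbb E\|\mathbf Y_{kn}\|^\alpha<\infty$; since $\|\mathbf Y_m\|^\alpha$ is a submartingale, monotonicity of $m\mapsto\mathbb E\|\mathbf Y_m\|^\alpha$ promotes this to a uniform bound. Doob's $L^\alpha$-theorem then delivers $L^\alpha$-convergence of $\mathbf Y_n$ to $\mathbf Y$, whence $\mathbb E\mathbf Y=\mathbf V$ and $\mathbb E\|\mathbf Y\|^\alpha<\infty$; positivity is automatic since $\mathbb E\|\mathbf Y\|=\|\mathbf V\|=1$.

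For part (b), both conclusions exploit the identity $\mathbf Y=\sum_{u\in\mathbb T_n}\mathbf X_u\mathbf Y(u)$ directly. The bound $\rho_n(\alpha)\le 1$ follows from the super-additivity $(\sum_\ell a_\ell)^\alpha\ge\sum_\ell a_\ell^\alpha$ on $\mathbb R_+$ (valid for $\alpha\ge 1$) applied to $Y_i=\sum_{u,j}(\mathbf X_u)_{ij}Y_j(u)$; taking expectation, together with the independence of $\mathbf X_u$ and $\mathbf Y(u)$ and $\mathbf Y(u)\stackrel d=\mathbf Y$, gives $\mathbf q\ge\mathbf M_n(\alpha)\mathbf q$ componentwise, where $\mathbf q=(\mathbb E Y_i^\alpha)_{i=1}^p$. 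Pair with the positive left eigenvector $\mathbf U_n(\alpha)$, and use $\mathbb E\|\mathbf Y\|^\alpha>0$ (ensuring $\mathbf U_n(\alpha)\cdot\mathbf q>0$) to conclude $\rho_n(\alpha)\le 1$. For $\mathbb E\|\sum_k\mathbf A_k\|^\alpha<\infty$, conditional expectation on $\mathcal F_1$ in (\ref{MCE2}) yields $\mathbb E[\mathbf Y\mid\mathcal F_1]=(\sum_k\mathbf A_k)\mathbb E\mathbf Y$; the identity $\mathbb E\mathbf Y=\mathbf M^n\mathbb E\mathbf Y$, Perron uniqueness, and non-degeneracy of $\mathbf Y$ force $\mathbb E\mathbf Y=c\mathbf V$ with $c>0$, and Jensen applied to $\|\cdot\|^\alpha$ converts $\mathbb E\|\mathbf Y\|^\alpha<\infty$ into $\mathbb E\|(\sum_k\mathbf A_k)\mathbf V\|^\alpha<\infty$; positivity of $\mathbf V$ and norm equivalence then give the desired bound.

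To strengthen to $\rho_n(\alpha)<1$ under (\ref{MCEC2}), I would revisit the super-additivity argument with the strict version $(a+b)^\alpha>a^\alpha+b^\alpha$ for $a,b>0$, $\alpha>1$. Condition (\ref{MCEC2}) ensures that with positive probability every $\mathbf A_k$, $k\le N$, has a strictly positive column, so the iterated products $\mathbf X_u$ carry many strictly positive entries on this event; combined with $\mathbb P(Y_j>0\text{ for all }j)>0$ (a consequence of the non-degeneracy of $\mathbf Y$ and the strict positivity of $\mathbf M$, via an iteration), one produces a positive-probability event supporting at least two simultaneously positive summands $(\mathbf X_u)_{ij}Y_j(u)$ in the sum for $Y_i$; strict super-additivity there upgrades $\mathbf U_n(\alpha)\cdot\mathbf q\ge\rho_n(\alpha)\mathbf U_n(\alpha)\cdot\mathbf q$ to a strict inequality. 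The main obstacle will be the clean derivation of the recursion in part (a) so that the constant matches exactly $p^{\alpha-1}\rho_n(\alpha)$: the argument naturally bifurcates into $\alpha\in(1,2]$ (where a von Bahr--Esseen-type bound suffices) and $\alpha>2$ (where Rosenthal's inequality with square-function controls is needed), and in each regime the combinatorial factors arising from switches among vector, matrix, and coordinate norms must be tracked carefully.
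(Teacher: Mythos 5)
Your overall plan for part (a) matches the paper's (martingale-difference decomposition along the Galton--Watson tree, Burkholder/von Bahr--Esseen for $\alpha\in(1,2]$ and a Rosenthal-type argument for $\alpha>2$, reduction to the $n$-step skeleton), and your part (b) is verbatim the paper's argument: super-additivity of $x\mapsto x^\alpha$ on the fixed-point identity paired with the left Perron eigenvector $\mathbf U_n(\alpha)$, conditional Jensen for the necessity of $\mathbb E\|\sum_k\mathbf A_k\|^\alpha<\infty$, and strict super-additivity on a positive-probability event under (\ref{MCEC2}). The one place you should be more careful is the ``schematic recursion'' $\mathbb E\|\mathbf Y_{n+m}\|^\alpha\le C+p^{\alpha-1}\rho_n(\alpha)\,\mathbb E\|\mathbf Y_m\|^\alpha$: as written it is too optimistic. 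Splitting $\mathbf Y_{n+m}-\mathbf V=(\mathbf Y_n-\mathbf V)+\sum_{u\in\mathbb T_n}\mathbf X_u(\mathbf Y_m(u)-\mathbf V)$ and applying $|a+b|^\alpha\le 2^{\alpha-1}(|a|^\alpha+|b|^\alpha)$ plus the von Bahr--Esseen constant on the centered sum yields a contraction factor of the form $c_\alpha\, p^{\alpha-1}\rho_n(\alpha)$ with $c_\alpha\ge 2^\alpha$, which may exceed $1$ even when $p^{\alpha-1}\rho_n(\alpha)<1$. So iterating that recursion at the given $n$ does not close.

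Two clean fixes, both cheap. (i) Iterate at a larger multiple of $n$: from $\mathbf M_{kn}(\alpha)\le p^{(\alpha-1)(k-1)}[\mathbf M_n(\alpha)]^k$ you get $p^{\alpha-1}\rho_{kn}(\alpha)\le[p^{\alpha-1}\rho_n(\alpha)]^k$, so for $k$ large the true contraction factor $c_\alpha\,p^{\alpha-1}\rho_{kn}(\alpha)$ drops below $1$ and your iteration goes through at level $kn$. (ii) Telescope rather than iterate, which is what the paper does: set $M_l:=Y_{ln,i}$, use Burkholder/von Bahr--Esseen in its \emph{summed} form $\mathbb E|M_L-M_0|^\alpha\le C\sum_{l<L}\mathbb E|M_{l+1}-M_l|^\alpha$ (for $\alpha\le 2$) or the Burkholder--Minkowski version $(\sum_l(\mathbb E|M_{l+1}-M_l|^\alpha)^{2/\alpha})^{\alpha/2}$ (for $\alpha>2$), and show $\mathbb E|M_{l+1}-M_l|^\alpha\le C\,[p^{\alpha-1}\rho_n(\alpha)]^l$ using exactly the $\ell^1$-$\ell^\alpha$ inequality, $\mathbb E\sum_{u\in\mathbb T_{ln}}[(\mathbf X_u)_{ij}]^\alpha=(\mathbf M_{ln}(\alpha))_{ij}$, and the sub-multiplicativity above. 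Then the geometric series converges under exactly $p^{\alpha-1}\rho_n(\alpha)<1$, with all stray constants absorbed into the prefactor. (The paper packages this as: re-sample the tree at $n$-step intervals to get a new single-generation process with $\bar\rho(\alpha)=\rho_n(\alpha)$, then run the $n=1$ argument.) For $\alpha>2$ you will also need the dyadic induction of the paper's Lemma~\ref{MCL1.2.2} to control the square-function terms $\mathbf Y_{\cdot}^{(2)},\mathbf Y_{\cdot}^{(4)},\dots$, and the log-convexity of $\rho(\cdot)$ (Kingman) to reduce the resulting collection of conditions to the single one $p^{\alpha-1}\rho_n(\alpha)<1$; your proposal does not mention either ingredient, though they sit naturally inside the ``Rosenthal with square-function control'' step you flagged.
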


\noindent\textbf{Remark 2.1. } (i) For $\alpha>1$,  under Assumption (H), the condition  $\mathbb E\|\sum\limits_{k=1}^N\textbf{A}_k\|^\alpha<\infty$ ensures that $\mathbf M(\alpha)$ is finite and strictly positive, so that $\rho(\alpha)$ exists. Notice that for each $t\in \mathbb R$ fixed,
$$[\textbf{M}(t)]^n\leq\textbf{ M}_n(t)\leq p^{(\alpha-1)(n-1)}[\textbf{M}(t)]^n,$$
where for two matrix  $\mathbf A=(a_{ij}), \mathbf B=(b_{ij})$, the inequality $\mathbf A\leq \mathbf B$ means that $a_{ij}\leq b_{ij}$ for all $i, j$.
Thus the existences of $\rho(t)$  and $\rho_n(t)$ are equivalent, and  we moreover have for each $t\in \mathbb R$ fixed,
$$\text{$\rho(t)^n\leq \rho_n(t)\leq p^{(\alpha-1)(n-1)}\rho(t)^n$.}$$
Therefore, under Assumption (H) and the condition  $\mathbb E\|\sum\limits_{k=1}^N\textbf{A}_k\|^\alpha<\infty$,
$\rho_n(t)$ exists for all $t\in [1,\alpha]$ and for all $n$. Besides, we remark that  the condition  $\mathbb E\|\sum\limits_{k=1}^N\textbf{A}_k\|^\alpha<\infty$ is equivalent to $\mathbb E \|\mathbf Y_1\|^\alpha<\infty$.

(ii) Under Assumption (H), $\mathbb E\|\mathbf Y\|^\alpha>0$  is equivalent to $\mathbb E(Y_i)^\alpha>0$ for all $i\in\{1,\cdots,p\}$. Indeed, by (\ref{MCE2}), one can see that $\mathbb E \mathbf Y$ is a an eigenvector associated to the eigenvalue $1$. If it is non-trivial, i.e. $\mathbb E \mathbf Y\neq \mathbf 0$, then $\mathbb E \mathbf Y=c\mathbf V$ for some constant $c> 0$, which implies that $\mathbb E \mathbf Y$ is positive.

\bigskip
\bigskip

Theorem \ref{MCT1}(a) shows a sufficient condition for the existence of the
$\alpha$th-moment ($\alpha>1$) of $\textbf{Y}$, or equivalently, the $L^\alpha$ convergence of the martingale $\{\mathbf Y_n\}$ to its limit $\textbf{Y}$. If $\mathbb E(Y_i)^\alpha<\infty$, it is obvious that $\mathbb E Y_i=V_i$ and $\mathbb P(Y_i>0)>0$. As $\mathbf Y$ is a solution of the equation (\ref{E}), Theorem \ref{MCT1}(a) in fact also gives the existence of a non-trivial solution of equation (\ref{E}).

Moreover, if $p^{(\alpha-1)}\rho_n(\alpha)<1$
for some positive integer $n$, Theorem \ref{MCT1} implies that $0<\mathbb E \|\mathbf Y\|^\alpha<\infty$ if and only if $\mathbb E \|\mathbf Y_1\|^\alpha<\infty$, which reveals that $\mathbf Y_{1}$ and $\mathbf Y$ would have the same asymptotic
properties.
In particular, for $p=1$, if $\mathbb P(\forall k\in\{1, 2,\cdots, N\}, A_k>0 )>0$, Theorem \ref{MCT1} says that
$$ 0<\mathbb E Y^\alpha<\infty\qquad \text{if and only if}\qquad\mathbb EY_1^\alpha<\infty \;\;\text{and}\;\; \rho(\alpha)<1.$$ This result was obtained by Liu (\cite{liu2000}, Theorem 2.1) with the help of a size-biased measure. Here our proof will present a different idea based on  inequalities for martingale. Our method, which is available for both $p=1$ and $p>1$, also avoids the trouble of finding an convenient size-biased measure for the case where $p>1$. We mention that this method  can also be used to the  complex case where $\textbf{A}_k$ are complex random matrixes and $\textbf{Z}$ and $\textbf{Z}(k)$  are complex random vectors, see Section 6.

\bigskip

\bigskip

Now we consider the existence of harmonic moments of $\textbf{Y}$, i.e., $\mathbb E (Y_i)^{-\lambda}<\infty$, for each $i\in\{1,2,\cdots, p\}$, where $\lambda>0$. We shall deal with a more general case, with a general non-trivial solution of equation (\ref{E}), denoted still by $\mathbf Z$, instead of $\mathbf Y$.

Let $\mathbf Z$ be a non-trivial solution of equation (\ref{E}). Then we have $\mathbb P(\mathbf Z>\mathbf 0)>0$, where $\mathbf Z>\mathbf 0$ means that $Z_i>0$ for all $i=1,2,\cdots, p$. Assume that (\ref{MCEC2}) holds, and
\begin{equation}\label{MCA1}
\mathbb P(N=0)=0,\qquad \mathbb P(N=1)<1.
\end{equation}
In fact, assumption (\ref{MCEC2}) is object to ensure that the probability $\mathbb P(\mathbf Z=\mathbf 0)$ is a solution of the equation $f(q)=q$, where $f(s)=\mathbb E s^ N$ ($0\leq s\leq 1$) is the generating function of $N$. Since $\mathbb P(\mathbf Z=\mathbf 0)<1$, under assumptions (\ref{MCA1}), by the unity of solution, we have $\mathbb P(\mathbf Z=\mathbf 0)=0$, or namely, $\mathbb P(\mathbf Z>\mathbf 0)=1$. Let
\begin{equation}
\phi(\mathbf t)=\mathbb Ee^{-\mathbf t\cdot\mathbf Z},\qquad\mathbf t=(t_1,\cdots,t_p)\in \mathbb R_{+}^p,
\end{equation}
be the Laplace transform of $\mathbf Z$, where  we write $\mathbf u\cdot\mathbf v=\sum\limits_{j=1}^pu_jv_j$ for the inner product of two vectors $\mathbf u$ and $\mathbf v$.
We are interested in the decay rate of $\phi(\mathbf t)$ as $\|\mathbf t\|\rightarrow\infty$ and that of the tail probability $\mathbb P(\mathbf y\cdot \mathbf Z\leq x)$ as $x\rightarrow0$, for given $\mathbf y=(y_1,\cdots,y_p)\in \mathbb R_{+}^p$ , as well as the harmonic moment $\mathbb E (\mathbf y\cdot \mathbf Z)^{-\lambda}$ for $\lambda>0$. Set
$$\underline {m}:=\mbox{\emph{essinf}} \;N$$
Then $\underline {m}\geq1$, since $\mathbb P(N=0)=0$. We have the following result.

\begin{thm}[Harmonic moments]\label{MCT2}
Assume (\ref{MCEC2}) and (\ref{MCA1}). Write $a_{ij}=(\mathbf A_1)_{ij}$.  If
$$\mathbb E \left(\min\limits_i\sum\limits_{j=1}^p a_{ij}\right)^{-\lambda}<\infty\qquad \text{and}\qquad\mathbb E \left[\left(\min\limits_i\sum\limits_{j=1}^p a_{ij}\right)^{-\lambda}\mathbf 1_{\{N=1\}}\right]<1
$$
for some $\lambda>0$, then
$$\phi(\mathbf t)=O(\|\mathbf t\|^{-\lambda})\quad(\|\mathbf t\|\rightarrow\infty),$$
and for every fixed non-zero $\mathbf y=(y_1,\cdots,y_p)\in \mathbb R_{+}^p$,
$$\mathbb P(\mathbf y\cdot \mathbf Z\leq x)=O(x^{\lambda})\quad(x\rightarrow0),\qquad\quad \mathbb E (\mathbf y\cdot \mathbf Z)^{-\lambda_1}<\infty\quad(0<\lambda_1<\lambda). $$
If additionally $\underline {m}>1$ and $\mathbb E\left[\prod\limits_{k=1}^{\underline {m}}\left(\min\limits_i\sum\limits_{j=1}^p (\mathbf A_k)_{ij}\right)^{-\lambda}\right]<\infty$, then
$$\phi(\mathbf t)=O(\|\mathbf t\|^{-\underline {m}\lambda})\;\;(\|\mathbf t\|\rightarrow\infty),\quad \mathbb P(\mathbf y\cdot \mathbf Z\leq x)=O(x^{\underline {m}\lambda})\;\;(x\rightarrow0),\quad \mathbb E (\mathbf y\cdot \mathbf Z)^{-\underline {m}\lambda_1}<\infty\;\;(0<\lambda_1<\lambda).$$
\end{thm}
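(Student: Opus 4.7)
The plan is to work through the Laplace transform $\phi$. The distributional equation (\ref{E}) yields the functional equation
$$\phi(\mathbf t)=\mathbb E\prod_{k=1}^N\phi(\mathbf A_k^{\top}\mathbf t),$$
and, because $\mathbf t\in\mathbb R_+^p$, the key geometric estimate
$$\|\mathbf A_k^{\top}\mathbf t\|=\sum_{i=1}^p t_i\sum_{j=1}^p(\mathbf A_k)_{ij}\geq m_k\|\mathbf t\|,\qquad m_k:=\min_i\sum_{j=1}^p(\mathbf A_k)_{ij}.$$
Under (\ref{MCEC2}) and (\ref{MCA1}), the discussion preceding the theorem gives $\mathbb P(\mathbf Z>\mathbf 0)=1$; in particular $\mathbf t\cdot\mathbf Z\geq\|\mathbf t\|\min_i Z_i\to\infty$ a.s., so $\phi(\mathbf t)\to 0$ as $\|\mathbf t\|\to\infty$, and $\mathbb E m_1^{-\lambda}<\infty$ forces $m_1>0$ a.s.

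The heart of the argument is the decay $\phi(\mathbf t)=O(\|\mathbf t\|^{-\lambda})$. I would introduce $G(r):=\sup_{\|\mathbf t\|\geq r}\|\mathbf t\|^\lambda\phi(\mathbf t)$ and show it is finite for some $r$. To bypass the apparent circularity, work first with the a priori finite truncation $G_M(r):=\sup_{r\leq\|\mathbf t\|\leq M}\|\mathbf t\|^\lambda\phi(\mathbf t)$ (finite by continuity of $\phi$ on the compact range). Split the functional equation on $\{N=1\}$ and $\{N\geq 2\}$. On $\{N=1\}\cap\{\|\mathbf A_1^{\top}\mathbf t\|\geq r\}$ the identity $\|\mathbf t\|^\lambda=m_1^{-\lambda}(m_1\|\mathbf t\|)^\lambda\leq m_1^{-\lambda}\|\mathbf A_1^{\top}\mathbf t\|^\lambda$ gives $\|\mathbf t\|^\lambda\phi(\mathbf A_1^{\top}\mathbf t)\leq m_1^{-\lambda}G(r)$; on $\{\|\mathbf A_1^{\top}\mathbf t\|<r\}\subseteq\{m_1<r/\|\mathbf t\|\}$ one uses $\|\mathbf t\|^\lambda\leq r^\lambda m_1^{-\lambda}$, bounding the contribution by $r^\lambda\mathbb E[m_1^{-\lambda}\mathbf 1_{\{m_1<r/\|\mathbf t\|,\,N=1\}}]=o(1)$ via dominated convergence. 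On $\{N\geq 2\}$ bound $\prod_{k=1}^N\phi(\mathbf A_k^{\top}\mathbf t)\leq\phi(\mathbf A_1^{\top}\mathbf t)\phi(\mathbf A_2^{\top}\mathbf t)$ and use the vanishing of $\phi(\mathbf A_2^{\top}\mathbf t)$ as $\|\mathbf t\|\to\infty$ (together with integrability of $m_1^{-\lambda}$) to reduce this contribution to $o(1)$ after absorbing the $\max(G_M(r),r^\lambda)\,m_1^{-\lambda}$ factor. Assembling,
$$G_M(r)\leq\mathbb E[m_1^{-\lambda}\mathbf 1_{\{N=1\}}]\,G_M(r)+\varepsilon(r),$$
with $\varepsilon(r)\to 0$ uniformly in $M$; since the coefficient is $<1$ by hypothesis, choosing $r$ large yields $G_M(r)\leq\varepsilon(r)/(1-\mathbb E[m_1^{-\lambda}\mathbf 1_{N=1}])$, and $M\to\infty$ gives $G(r)<\infty$.

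The tail estimate is then immediate from the Chebyshev-type inequality $\mathbb P(\mathbf y\cdot\mathbf Z\leq x)\leq e\,\mathbb E\exp(-\mathbf y\cdot\mathbf Z/x)=e\,\phi(\mathbf y/x)=O(x^\lambda)$, and the harmonic-moment finiteness from the Gamma identity
$$\mathbb E(\mathbf y\cdot\mathbf Z)^{-\lambda_1}=\frac{1}{\Gamma(\lambda_1)}\int_0^\infty t^{\lambda_1-1}\phi(t\mathbf y)\,dt,$$
which converges for $0<\lambda_1<\lambda$ by splitting at $t=1$ and using $\phi\leq 1$ on $[0,1]$ together with $\phi(t\mathbf y)=O(t^{-\lambda})$ on $[1,\infty)$. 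When $\underline m>1$, $\mathbb P(N=1)=0$ makes the $N=1$ condition automatic, so the first part supplies $\phi(\mathbf s)\leq C\|\mathbf s\|^{-\lambda}$; plugged back into the functional equation (noting $N\geq\underline m$ a.s.),
$$\phi(\mathbf t)\leq\mathbb E\prod_{k=1}^{\underline m}\phi(\mathbf A_k^{\top}\mathbf t)\leq C^{\underline m}\|\mathbf t\|^{-\underline m\lambda}\,\mathbb E\prod_{k=1}^{\underline m}m_k^{-\lambda},$$
which is finite by hypothesis; the tail and harmonic-moment conclusions transfer by the same Chebyshev and Gamma arguments. The main difficulty is the fixed-point step in the second paragraph: the truncation device and the two dominated-convergence arguments (for the $\{\|\mathbf A_1^{\top}\mathbf t\|<r\}$ piece and for the $\{N\geq 2\}$ piece) are the delicate technicalities.
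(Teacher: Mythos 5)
Your high-level plan — pass to the Laplace transform, extract decay from the functional equation $\phi(\mathbf t)=\E\prod_{k\leq N}\phi(\mathbf A_k^{\top}\mathbf t)$ using the key inequality $\|\mathbf A_k^{\top}\mathbf t\|\geq m_k\|\mathbf t\|$, then transfer to tails and harmonic moments, and finally bootstrap once more when $\underline m>1$ — is the right shape and matches the paper's. The transfer steps (Markov inequality for the tail, Gamma integral for the moment, the $\underline m>1$ bootstrap) and the Laplace identity are all fine. But the central step, establishing $\phi(\mathbf t)=O(\|\mathbf t\|^{-\lambda})$, has a genuine gap in your truncation device. You define $G_M(r)=\sup_{r\leq\|\mathbf t\|\leq M}\|\mathbf t\|^{\lambda}\phi(\mathbf t)$ but then bound $\|\mathbf t\|^{\lambda}\phi(\mathbf A_1^{\top}\mathbf t)\leq m_1^{-\lambda}G(r)$ on $\{\|\mathbf A_1^{\top}\mathbf t\|\geq r\}$: this uses the untruncated $G(r)$, whose finiteness is exactly what you are trying to prove. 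If you replace it by $G_M(r)$ the recursion only closes when $\|\mathbf A_1^{\top}\mathbf t\|\leq M$ as well, and the escape event $\{\|\mathbf A_1^{\top}\mathbf t\|>M\}$ contributes a term that must be bounded by $1\cdot G_M(r)$ rather than $m_1^{-\lambda}G_M(r)$ (when $m_1>1$ the factor $m_1^{-\lambda}<1$ is not available for $\|\mathbf A_1^{\top}\mathbf t\|>M\geq\|\mathbf t\|$); at $\|\mathbf t\|=M$ this escape event has probability comparable to $\P(N=1,\ \max_i\sum_j a_{ij}>1)$, which does not vanish as $M\to\infty$. So your assembled inequality $G_M(r)\leq\E[m_1^{-\lambda}\1_{\{N=1\}}]G_M(r)+\varepsilon(r)$ does not hold with the stated coefficient uniformly in $M$. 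A secondary weak spot is the $\{N\geq2\}$ piece: "$\phi(\mathbf A_2^{\top}\mathbf t)\to0$" requires $\min_i\sum_j(\mathbf A_2)_{ij}>0$ almost surely on $\{N\geq2\}$, which is not among the stated hypotheses (only $m_1>0$ a.s. follows from $\E m_1^{-\lambda}<\infty$); to be fair, the paper's own passage $N_\delta\uparrow N$ leans on the same implicit positivity.

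The paper sidesteps the circularity entirely with a different mechanism (its Lemma 5.1): starting from the trivial global bound $\phi\leq1$, it writes (after a change of measure that absorbs the $\{N\geq2\}$ factors into a prefactor $q<1$) the inequality $\phi(\mathbf t)\leq q\,\E\phi(\mathbf t\tilde{\mathbf A})+C\|\mathbf t\|^{-\lambda}$ valid for \emph{all} $\mathbf t\neq\mathbf0$, and then iterates $n$ times, giving $\phi(\mathbf t)\leq q^n\E\phi(\mathbf t\tilde{\mathbf A}_1\cdots\tilde{\mathbf A}_n)+C\sum_{k<n}q^k\E\|\mathbf t\tilde{\mathbf A}_1\cdots\tilde{\mathbf A}_k\|^{-\lambda}$. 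The first term vanishes as $n\to\infty$ and the series converges because $q\,\E(\min_i\sum_j\tilde a_{ij})^{-\lambda}<1$, which is made to hold by sending a pair of auxiliary parameters $\delta,\varepsilon\downarrow0$ and using the hypothesis $\E[m_1^{-\lambda}\1_{\{N=1\}}]<1$. This "iterate in $n$" argument is the clean version of the fixed point you are after: it never needs an a priori finite $G_M(r)$, and the escape-to-infinity issue never arises because the bound $C\|\mathbf t\|^{-\lambda}$ already covers all of $\R_+^p\setminus\{\mathbf0\}$. If you want to salvage your fixed-point formulation, you would need to control the $\{\|\mathbf A_1^{\top}\mathbf t\|>M\}$ escape explicitly; as written, the argument does not close.
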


\bigskip

From Theorem \ref{MCT2}, we can deduce similar results for each component $Z_i$ of $\mathbf Z$. Let $\phi_i(t)=\mathbb Ee^{-tZ_i}$ ($t>0$) be the Laplace transform of $Z_i$. Denote by $\mathbf e_i$ the vector which the $i$-th  component is $1$ and the others are $0$. Then $\phi_i(t)=\phi(t\mathbf e_i)$, and $\mathbf e_i\cdot \mathbf Z=Z_i$. Applying Theorem \ref{MCT2} to $\phi(t\mathbf e_i)$ and $\mathbf e_i\cdot \mathbf Z$, we immediately get the following corollary.

\begin{co}\label{MCC1}
Under the conditions of Theorem \ref{MCT2}, we have for each $i\in\{1,2,\cdots,p\}$,
$$\phi_i( t)=O( t^{-\lambda})\;\;( t\rightarrow\infty),\quad \mathbb P(Z_i\leq x)=O(x^{\lambda})\;\;(x\rightarrow0),\quad \mathbb E (Z_i)^{-\lambda_1}<\infty\;\;(0<\lambda_1<\lambda).$$
If additionally $\underline {m}>1$ and $\mathbb E\left[\prod\limits_{k=1}^{\underline {m}}\left(\min\limits_i\sum\limits_{j=1}^p (\mathbf A_k)_{ij}\right)^{-\lambda}\right]<\infty$, then
$$\phi_i( t)=O( t^{-\underline {m}\lambda})\;\;( t\rightarrow\infty),\quad \mathbb P(Z_i\leq x)=O(x^{\underline {m}\lambda})\;\;(x\rightarrow0),\quad \mathbb E (Z_i)^{-\underline {m}\lambda_1}<\infty\;\;(0<\lambda_1<\lambda).$$
\end{co}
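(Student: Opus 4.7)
The plan is to obtain Corollary \ref{MCC1} as a direct specialization of Theorem \ref{MCT2}, choosing the inner-product vector $\mathbf y$ and the Laplace argument $\mathbf t$ to be aligned with the $i$-th coordinate axis. Concretely, I will take $\mathbf y = \mathbf e_i$ and $\mathbf t = t\mathbf e_i$ in the statement of Theorem \ref{MCT2}, for a given index $i \in \{1,\dots,p\}$ and a scalar $t > 0$, respectively $x > 0$.

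First, I will verify the two identifications that justify this substitution. By the definition of the inner product, $\mathbf e_i \cdot \mathbf Z = Z_i$, and hence $\phi_i(t) = \mathbb E e^{-tZ_i} = \mathbb E e^{-(t\mathbf e_i)\cdot \mathbf Z} = \phi(t\mathbf e_i)$. Furthermore, since the norm on vectors is chosen as the $L^1$-norm, one has $\|t\mathbf e_i\| = t$, so the regime $\|t\mathbf e_i\| \to \infty$ is exactly $t \to \infty$. Note also that $\mathbf e_i \in \mathbb R_+^p$ is non-zero, so the hypotheses of Theorem \ref{MCT2} on the vector $\mathbf y$ are satisfied.

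Second, I will simply substitute into each of the three conclusions of Theorem \ref{MCT2}. Under the hypotheses of the first part of that theorem, $\phi(t\mathbf e_i) = O(\|t\mathbf e_i\|^{-\lambda}) = O(t^{-\lambda})$; the tail estimate $\mathbb P(\mathbf e_i \cdot \mathbf Z \leq x) = O(x^\lambda)$ translates to $\mathbb P(Z_i \leq x) = O(x^\lambda)$; and the harmonic-moment bound $\mathbb E(\mathbf e_i \cdot \mathbf Z)^{-\lambda_1} < \infty$ becomes $\mathbb E(Z_i)^{-\lambda_1} < \infty$ for $0 < \lambda_1 < \lambda$. Under the additional hypothesis ($\underline m > 1$ and the corresponding finite expectation involving the product over $k=1,\dots,\underline m$), the identical substitution upgrades the exponents $\lambda$ to $\underline m \lambda$ in each of the three conclusions.

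There is no real obstacle here: the corollary is a one-line consequence once the identifications $\mathbf e_i \cdot \mathbf Z = Z_i$ and $\phi_i(t) = \phi(t\mathbf e_i)$ are noted. The only point worth stating explicitly in writing the proof is that the $L^1$-norm convention on vectors makes $\|t\mathbf e_i\| = t$, so that the asymptotic in Theorem \ref{MCT2} translates without any implicit constant depending on $i$.
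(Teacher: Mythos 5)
Your proof is correct and is essentially identical to the paper's: both set $\mathbf y = \mathbf e_i$ and $\mathbf t = t\mathbf e_i$, observe $\phi_i(t) = \phi(t\mathbf e_i)$, $\mathbf e_i\cdot\mathbf Z = Z_i$, and $\|t\mathbf e_i\| = t$, and then read off the conclusions of Theorem \ref{MCT2}. No gap.
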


\bigskip

For $p=1$, Theorem \ref{MCT2} (or Corollary \ref{MCC1}) coincides with  the results of Liu (\cite{liu2}, Theorems 2.1 and 2.4). But when $p>1$, to find the critical value for the existence of harmonic moments like \cite{liu2} seems difficult.
Similar to  (\cite{liu2}, Theorem 2.5), we also have result below about the exponential decay rate of $\phi(\mathbf t)$.

\begin{thm}[The exponential case]\label{MCT3}
Assume that (\ref{MCEC2}) holds, $\underline{m}\geq 2$ and $\min\limits_{i,j}\mathbf (A_k)_{ij}\geq \underline {a}$ a.s. for some constant $\underline {a}>0$ and all $1\leq k\leq \underline {m}$.
\begin{itemize}
\item[(a)] If $\mathbb P\left(N=\underline {m}\right)>0$, then there exists a constant $C_1>0$ such that for all $\|\mathbf t\|>0$ large enough,
    $$\phi(\mathbf t)\leq \exp\{-C_1\|\mathbf t\|^\gamma\},$$
    and for every fixed non-zero $\mathbf y=(y_1,\cdots,y_p)\in \mathbb R_{+}^p$, there exists a constant $C_{1,\mathbf y}>0$ such that for all $x>0$ small enough,
    $$\mathbb P(\mathbf y\cdot \mathbf Z\leq x)\leq  \exp\{-C_{1,\mathbf y}x^{-\gamma/(1-\gamma)}\},$$
    where $\gamma=-\log\underline{m}/\log{(\underline{a}p)}\in(0,1)$.
\item[(b)]For some $\varepsilon>0$ satisfying $(\underline{a}+\varepsilon)p\underline {m}<1$, if $\mathbb P\left(N=\underline {m},\; \max\limits_{ij}(A_k)_{ij}\leq \underline{a}+\varepsilon\; \text{for all } 1\leq k\leq \underline {m}\right)>0$, then there exists a constant $C_2>0$ such that for all $\|\mathbf t\|>0$ large enough,
    $$\phi(\mathbf t)\geq \exp\{-C_2\|\mathbf t\|^{\gamma(\varepsilon)}\},$$
    and for every fixed $\mathbf y=(y_1,\cdots,y_p)\in \mathbb R_{+}^p$, there exists a constant $C_{2,\mathbf y}>0$ such that for all $x>0$ small enough,
    $$\mathbb P(\mathbf y\cdot \mathbf Z\leq x)\geq  \exp\{-C_{2,\mathbf y}x^{-\gamma(\varepsilon)/(1-\gamma(\varepsilon))}\},$$
    where $\gamma(\varepsilon)=-\log\underline{m}/\log{[(\underline{a}+\varepsilon) p]}\in(0,1)$.
\end{itemize}
\end{thm}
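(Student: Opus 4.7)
The plan is to exploit the fixed-point equation (\ref{MCE2}) for $\mathbf{Z}$ at the level of its Laplace transform. Conditioning on $(N,\mathbf{A}_1,\mathbf{A}_2,\ldots)$ and using independence gives
$$\phi(\mathbf{t})=\mathbb{E}\prod_{k=1}^N \phi(\mathbf{A}_k^\top\mathbf{t}),$$
with $\mathbf{A}_k^\top$ the transpose of $\mathbf{A}_k$. Because $\mathbf{Z}\geq\mathbf{0}$ a.s., $\phi$ is coordinate-wise non-increasing, so componentwise comparisons of $\mathbf{A}_k^\top\mathbf{t}$ with a scalar multiple of $\mathbf{1}=(1,\ldots,1)$ translate into bounds on $\phi(\mathbf{A}_k^\top\mathbf{t})$ via the one-variable function $\psi(s):=\phi(s\mathbf{1})$. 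Set $g(s):=-\log\psi(s)$, which is strictly positive for $s>0$ (because (\ref{MCEC2}) together with $\underline{m}\geq 2$ forces $\mathbf{Z}>\mathbf{0}$ a.s., hence $\psi(s)<1$); the whole proof reduces to deriving and iterating a self-similar inequality for $g$.

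For part (a), the hypothesis $(\mathbf{A}_k)_{ij}\geq\underline{a}$ a.s. for $k\leq\underline{m}$ combined with $N\geq\underline{m}$ a.s. gives $(\mathbf{A}_k^\top\mathbf{t})_i\geq\underline{a}\|\mathbf{t}\|$ for every $i$ and every $k\leq\underline{m}$; dropping the remaining factors of the product (each $\leq 1$) yields
$$\phi(\mathbf{t})\leq\psi(\underline{a}\|\mathbf{t}\|)^{\underline{m}},\qquad \psi(s)\leq\psi(\underline{a}ps)^{\underline{m}},$$
the second by specializing to $\mathbf{t}=s\mathbf{1}$. Equivalently $g(s)\geq\underline{m}\,g(\underline{a}ps)$, which iterated $n$ times from a fixed $s_0>0$ gives $g((\underline{a}p)^{-n}s_0)\geq\underline{m}^n g(s_0)$; since $g(s_0)>0$, this rearranges to $g(s)\geq C_1' s^{\gamma}$ for $s$ large, with $\gamma=-\log\underline{m}/\log(\underline{a}p)\in(0,1)$, proving the stretched-exponential upper bound on $\phi$. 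The tail estimate follows from the Chernoff inequality $\mathbb{P}(\mathbf{y}\cdot\mathbf{Z}\leq x)\leq e^{tx}\phi(t\mathbf{y})$: substituting $\phi(t\mathbf{y})\leq\exp(-C_1\|\mathbf{y}\|^\gamma t^\gamma)$ and optimizing over $t>0$ produces the exponent $-C_{1,\mathbf{y}}x^{-\gamma/(1-\gamma)}$.

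For part (b), I restrict the expectation to the positive-probability event $E=\{N=\underline{m},\ \max_{ij}(\mathbf{A}_k)_{ij}\leq\underline{a}+\varepsilon\text{ for all }1\leq k\leq\underline{m}\}$; on $E$ we have $\mathbf{A}_k^\top\mathbf{t}\leq(\underline{a}+\varepsilon)\|\mathbf{t}\|\mathbf{1}$, hence $\phi(\mathbf{A}_k^\top\mathbf{t})\geq\psi((\underline{a}+\varepsilon)\|\mathbf{t}\|)$, giving
$$\phi(\mathbf{t})\geq\mathbb{P}(E)\,\psi((\underline{a}+\varepsilon)\|\mathbf{t}\|)^{\underline{m}},\qquad g(s)\leq -\log\mathbb{P}(E)+\underline{m}\,g((\underline{a}+\varepsilon)ps).$$
Iterating yields $g(s)\leq\frac{\underline{m}^n-1}{\underline{m}-1}\log(1/\mathbb{P}(E))+\underline{m}^n g(b^n s)$ with $b=(\underline{a}+\varepsilon)p<1$ (from $(\underline{a}+\varepsilon)p\underline{m}<1$ and $\underline{m}\geq 2$); choosing $n=n(s)$ so that $b^n s$ first enters a fixed bounded interval produces $g(s)\leq Cs^{\gamma(\varepsilon)}$ with $\gamma(\varepsilon)=-\log\underline{m}/\log b\in(0,1)$, and hence the lower bound on $\phi$. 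For the tail lower bound I use the elementary inequality $\mathbb{E}e^{-tW}\leq\mathbb{P}(W\leq x)+e^{-tx}$ with $W=\mathbf{y}\cdot\mathbf{Z}$, combine with $\phi(t\mathbf{y})\geq\exp(-C_2\|\mathbf{y}\|^{\gamma(\varepsilon)}t^{\gamma(\varepsilon)})$, and choose $t$ so that the Laplace lower bound is at least twice the error term $e^{-tx}$; this forces $tx$ to be of order $x^{-\gamma(\varepsilon)/(1-\gamma(\varepsilon))}$, and yields the claimed exponent.

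The main obstacle is the iteration in part (b): without an a priori moment bound on $\mathbf{Z}$ (e.g.\ $\mathbb{E}\|\mathbf{Z}\|<\infty$) one cannot iterate to $n=\infty$, since $\underline{m}^n g(b^n s)$ need not vanish. I circumvent this by terminating the iteration at a finite, $s$-dependent step once the argument has entered a bounded interval; continuity of $\psi$ at $0$ together with $\psi(0)=1$ then suffice to control the residual, which ends up of the same order $s^{\gamma(\varepsilon)}$ as the dominant arithmetic-series term.
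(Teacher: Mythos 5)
Your proof is correct and, on the Laplace-transform side, follows essentially the same route as the paper: pass to the functional equation $\phi(\mathbf t)=\E\prod_{k=1}^N\phi(\mathbf t\mathbf A_k)$, compare $\phi(\mathbf t\mathbf A_k)$ with the scalar-argument function $\psi(s)=\phi(s\mathbf 1)$ using coordinate-wise monotonicity of $\phi$, iterate the resulting self-similar inequality, and terminate the iteration at a finite, $\|\mathbf t\|$-dependent step once the scaled argument falls into a bounded interval where $\psi$ (equivalently $g=-\log\psi$) is controlled by a constant. Your clean formulation in terms of $g$ makes the bookkeeping transparent, but it is the same decomposition and the same finite-stopping trick that the paper uses.

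Where you genuinely depart from the paper is the derivation of the tail-probability bounds from the Laplace-transform bounds. The paper dispatches these in one line by invoking a Tauberian theorem of exponential type (citing \cite{liu96}). You instead argue elementarily: the upper bound on $\P(\mathbf y\cdot\mathbf Z\le x)$ via the Chernoff-type inequality $\P(W\le x)\le e^{tx}\E e^{-tW}$ followed by optimization in $t$, and the lower bound via the decomposition $\E e^{-tW}\le\P(W\le x)+e^{-tx}$ followed by a choice of $t$ that keeps the Laplace lower bound at least twice the error term. Both choices lead to $t\asymp x^{-1/(1-\gamma)}$ and the correct exponent $x^{-\gamma/(1-\gamma)}$. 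Your route is more self-contained and makes visible exactly which constants propagate; the paper's route is shorter but relies on an external Tauberian result. Either is a valid path; yours is arguably preferable for a reader who does not want to chase the reference.

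One small gap you should fill: you implicitly need $\underline{a}p<1$ in part (a) for the iteration to move outward (so that $(\underline ap)^{-n}s_0\to\infty$) and for $\gamma=-\log\underline m/\log(\underline ap)$ to be positive, and you need $\underline ap\,\underline m<1$ to conclude $\gamma<1$. The paper proves this from the normalization $\mathbf M\mathbf V=\mathbf V$: summing over components gives $\sum_jV_j=\E\sum_{k=1}^N\sum_{j,l}(\mathbf A_k)_{jl}V_l\ge\underline a\,\underline m\,p\sum_lV_l$, and the inequality is strict by (\ref{MCEC2}) together with $\P(N=\underline m)>0$; hence $\underline a\,\underline m\,p<1$. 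You state the analogous inequality $(\underline a+\varepsilon)p\underline m<1$ as a hypothesis in (b), but in (a) it must be derived, and your write-up skips this. It is a one-line fix, but as written your iteration in (a) does not justify that the scaled arguments actually grow, nor that the claimed $\gamma$ lies in $(0,1)$.
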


\bigskip

Finally, as applications of the above moment results for the limit $\mathbf Y$ of Mandelbrot's martingale $\{\mathbf Y_n\}$, we consider the MBRW described in Example 1.1 and show the sufficient conditions for the existence of moments (of positive and negative orders) of $W_i(t)$, for each $i=1, 2, \cdots, p$ and for $t\in\mathbb R$ fixed. For MBRW, it is obvious that (\ref{MCEC2}) is satisfied. Notice that $\mathbf{M}(\alpha)=\frac{\tilde{\mathbf M}(\alpha t)}{\tilde \rho(t)^\alpha}$, which leads to  $\rho(\alpha)=\frac{\tilde{\rho}(\alpha t)}{\tilde \rho(t)^\alpha}$. Applying Theorem \ref{MCT1} yields the result for the moments of positive orders, and   Theorem \ref{MCT2} yields the one for the  moments of negative orders.

\begin{co}[Application to MBRW]\label{MCC2}We consider the MBRW described in Example 1.1.
\begin{itemize}
\item[(a)]Let $\alpha>1$. If $\max\limits_{i}\mathbb E\left(W_{1,i}(t)\right)^\alpha<\infty$ and $p^{\alpha-1}\frac{\tilde{\rho}(\alpha t)}{\tilde \rho(t)^\alpha}<1$, then $\max\limits_{i}\mathbb E [W_i(t)]^\alpha<\infty$.
\item[(b)]Assume (\ref{MCA1}). Denote by $S_1^i$ the displacement of the first child ${1}$ of the initial particle $\emptyset$ of type $i\in\{1,\cdots, p\}$. Let $\lambda>0$. If $\max\limits_{i}\mathbb E e^{-(\lambda+\varepsilon) t S_1^i}<\infty$ and $\mathbb E \max\limits_{i}e^{-(\lambda+\varepsilon) S_1^i}\mathbf 1_{\{N=1\}}<1$ for some $\varepsilon>0$, then $\max\limits_{i}\mathbb E [W_i(t)]^{-\lambda}<\infty$.
\end{itemize}
\end{co}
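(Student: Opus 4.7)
The plan is to apply Theorems \ref{MCT1} and \ref{MCT2} directly in the MBRW setting of Example 1.1 through the dictionary set up there, namely $(\mathbf A_k)_{ij}=(e^{-tS_k}/\tilde\rho(t))\mathbf 1_{\{\tau(k)=j\}}$ (with $\tau(\emptyset)=i$) and $\mathbf Y_n=(W_{n,i}(t)\tilde V_i(t))_{i=1}^p$. Since every entry of $\tilde{\mathbf V}(t)$ is strictly positive, moments of $W_i(t)$ of either sign coincide, up to a positive constant factor, with the corresponding moments of $Y_i$; hence the two conclusions of the corollary reduce, respectively, to $\mathbb E\|\mathbf Y\|^\alpha<\infty$ and to $\mathbb E Y_i^{-\lambda}<\infty$ for each $i$.

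For part (a), the text already records $\rho(\alpha)=\tilde\rho(\alpha t)/\tilde\rho(t)^\alpha$, so the spectral hypothesis of the corollary reads exactly $p^{\alpha-1}\rho_1(\alpha)<1$, verifying the condition $p^{\alpha-1}\rho_n(\alpha)<1$ of Theorem \ref{MCT1}(a) with $n=1$. By Remark 2.1(i), the moment condition $\mathbb E\|\sum_k\mathbf A_k\|^\alpha<\infty$ is equivalent to $\mathbb E\|\mathbf Y_1\|^\alpha<\infty$; inserting $\|\mathbf Y_1\|=\sum_i W_{1,i}(t)\tilde V_i(t)$ and applying the convexity bound $(\sum_{i=1}^p x_i)^\alpha\le p^{\alpha-1}\sum_i x_i^\alpha$ reduces this to $\max_i\mathbb E(W_{1,i}(t))^\alpha<\infty$, which is the second hypothesis of part (a). Theorem \ref{MCT1}(a) then yields $\max_i\mathbb E[W_i(t)]^\alpha<\infty$.

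For part (b), assumption (\ref{MCEC2}) is automatic in MBRW: for each $k$, column $\tau(k)$ of $\mathbf A_k$ is constantly equal to $e^{-tS_k}/\tilde\rho(t)>0$ across all rows, and (\ref{MCA1}) is the standing assumption. The matrix-to-scalar translation is
\[
\min_i\sum_{j=1}^p(\mathbf A_1)_{ij}=\tilde\rho(t)^{-1}\min_i e^{-tS_1^i},\qquad\Bigl(\min_i\sum_j(\mathbf A_1)_{ij}\Bigr)^{-\lambda'}=\tilde\rho(t)^{\lambda'}\max_i e^{\lambda' tS_1^i}\le\tilde\rho(t)^{\lambda'}\sum_{i=1}^p e^{\lambda' tS_1^i}.
\]
Choosing $\lambda'=\lambda+\varepsilon'$ with $\varepsilon'\in(0,\varepsilon)$, the first scalar hypothesis of the corollary supplies the finiteness $\mathbb E(\min_i\sum_j a_{ij})^{-\lambda'}<\infty$ required by Theorem \ref{MCT2}, while the strict second scalar inequality, combined with dominated convergence as $\varepsilon'\downarrow 0$, furnishes the corresponding strict inequality $\mathbb E[(\min_i\sum_j a_{ij})^{-\lambda'}\mathbf 1_{\{N=1\}}]<1$. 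Applying Theorem \ref{MCT2} with $\mathbf y=\mathbf e_i$ then yields $\mathbb E Y_i^{-\lambda_1}<\infty$ for every $0<\lambda_1<\lambda'$, and specialising $\lambda_1=\lambda$ concludes.

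The main obstacle is bookkeeping in (b): matching signs of $t$ between the matrix-level hypothesis $(\min_i\sum_j a_{ij})^{-\lambda'}$ and the scalar exponentials natural to MBRW, absorbing the factor $p$ from the $\max\to\sum$ passage into a finite constant, and exploiting the $\varepsilon$-slack in the corollary's scalar hypotheses to leave room for the strict inequality in Theorem \ref{MCT2}. Once this dictionary is in hand, both parts follow by direct invocation of the relevant theorem.
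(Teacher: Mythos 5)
Your proposal follows the same route as the paper's own proof, which is extremely terse (essentially the single line "$\rho(\alpha)=\tilde\rho(\alpha t)/\tilde\rho(t)^\alpha$, apply Theorems~\ref{MCT1} and~\ref{MCT2}"). For part (a) your verification is complete and correct: you invoke Theorem~\ref{MCT1}(a) with $n=1$, translate $\rho(\alpha)$, and reduce the matrix-level moment condition $\E\|\sum_k\mathbf A_k\|^\alpha<\infty$ to $\max_i\E(W_{1,i}(t))^\alpha<\infty$ via $\|\mathbf Y_1\|=\sum_i W_{1,i}(t)\tilde V_i(t)$; this matches what the paper intends.

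For part (b) there is a genuine gap, which your own ``main obstacle'' paragraph gestures at but does not close. Your translation
\[
\Bigl(\min_i\sum_{j}(\mathbf A_1)_{ij}\Bigr)^{-\lambda'}=\tilde\rho(t)^{\lambda'}\max_i e^{\,\lambda' t S_1^i}
\]
is correct: since $(\mathbf A_1)_{ij}=\tfrac{e^{-tS_1}}{\tilde\rho(t)}\mathbf 1_{\{\tau(1)=j\}}$ with $\tau(\emptyset)=i$, the row sums equal $e^{-tS_1^i}/\tilde\rho(t)$, and taking $(\cdot)^{-\lambda'}$ of the minimum flips the sign to a \emph{positive} exponent $+\lambda' t S_1^i$. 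You then assert that ``the first scalar hypothesis of the corollary supplies the finiteness.'' But that hypothesis reads $\max_i\E e^{-(\lambda+\varepsilon)tS_1^i}<\infty$, with a \emph{negative} exponent $-(\lambda+\varepsilon)tS_1^i$; it controls an entirely different tail of $tS_1^i$ and cannot, as literally written, dominate $\E\max_i e^{\,\lambda' t S_1^i}$. The same mismatch occurs in the strict-inequality hypothesis, which additionally lacks the factor $t$ in the exponent. The statement of the corollary almost certainly contains sign and $t$-factor typos — the hypotheses should involve $e^{+(\lambda+\varepsilon)tS_1^i}$ — and with that correction your $\varepsilon'$-and-dominated-convergence argument goes through. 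But as the proof stands, you neither flag the typo nor supply a bridge from the hypothesis as written to the quantity $\E(\min_i\sum_j a_{ij})^{-\lambda'}$ that Theorem~\ref{MCT2} requires; that step is a non sequitur and needs to be made explicit.
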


Corollary \ref{MCC2}(a) gives a sufficient condition for the existence of $\alpha$th-moment of $W_i(t)$. In fact, if we deal with the martingale $\{W_{n,i}(t)\}$ directly according to the ideas in the proof of Theorem \ref{MCT1}, the condition $p^{\alpha-1}\frac{\tilde{\rho}(\alpha t)}{\tilde \rho(t)^\alpha}<1$ can be weaken to $\frac{\tilde{\rho}(\alpha t)}{\tilde \rho(t)^\alpha}<1$ (see Huang \cite{huangT}, where we show that $\max\limits_{i}\mathbb E\left(W_{1,i}(t)\right)^\alpha<\infty$ and $\frac{\tilde{\rho}(\alpha t)}{\tilde \rho(t)^\alpha}<1$ is a necessary  and sufficient condition for $\max\limits_{i}\mathbb E [W_i(t)]^\alpha<\infty$.

\bigskip
The rest part of the paper is arranged as follows. In next section, we shall establish two auxiliary inequalities for the  martingale $\{\mathbf Y_n^{(t)}\}$, which will be used in Section 4 for the proof of Theorem \ref{MCT1}. In Section 5, we shall prove Theorems \ref{MCT2} and  \ref{MCT3}. Finally,  we shall consider the complex case in Section 6, where we  shall show sufficient conditions for the $L^\alpha$ convergence and  the $\alpha$th-moment of the Mandelbrot's martingale $\{\mathbf Y_n\}$.


\section {The martingale $\{\mathbf Y_n^{(t)}\}$}
The critical idea of the proof of Theorem \ref{MCT1} is to notice the double martingale structure (cf \cite{GK} for more information) of the martingale $\{\mathbf Y_n^{(t)}\}$ and apply the inequality  of martingale (Burkholder's inequality) to it. We shall go along the proof of Theorem \ref{MCT1} according to the lines of Huang \& Liu \cite{huang3} or Alsmeyer \emph{et al.} \cite{IK}.  In this section, we show two lemmas (inequalities) to the martingale $\{\mathbf Y_n^{(t)}\}$ which will be used in the proof of Theorem \ref{MCT1}.

\begin{lem}\label{MCL1.2.1}
Let $\alpha>1$. Fix $t\in \mathbb R$. If $\max\limits_i\mathbb E\left[Y_{1,i}^{(t)}\right]^\alpha<\infty$, then for each $i=1,\cdots,p$,
\begin{itemize}
\item[(a)] for $\alpha\in(1,2]$,
\begin{equation}\label{MCE1.2.1}
\mathbb E\left|Y_{n+1,i}^{(t)}-Y_{n,i}^{(t)}\right|^\alpha\leq C p^{(\alpha-1)n}\left[\frac{\rho(\alpha t)}{\rho(t)^\alpha}\right]^n;
\end{equation}
\item[(b)] for $\alpha>2$,
\begin{equation}\label{MCE1.2.2}
\mathbb E\left|Y_{n+1,i}^{(t)}-Y_{n,i}^{(t)}\right|^\alpha\leq C p^{\alpha n/2}\left[\frac{\rho(2 t)^{\alpha/2}}{\rho(t)^\alpha}\right]^n
\mathbb E[Y_{n,i}^{(2t)}]^{\alpha/2},
\end{equation}
\end{itemize}
where $C$ is a constant depending on $\alpha,p,t$.

\end{lem}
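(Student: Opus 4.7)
The plan is to write the martingale increment $Y_{n+1,i}^{(t)}-Y_{n,i}^{(t)}$ as a sum of conditionally independent mean-zero terms indexed by $u\in\mathbb{T}_n$, apply the appropriate moment inequality for the regime of $\alpha$ (von Bahr--Esseen for $\alpha\in(1,2]$, Burkholder's square-function inequality for $\alpha>2$), and reduce the result to the advertised eigenvalue ratios via one matrix-Jensen estimate. Working with the properly normalized martingale $\rho(t)^{-n}\sum_{u\in\mathbb{T}_n}\mathbf{X}_u^{(t)}\mathbf{V}(t)$, the increment admits the decomposition
\begin{equation*}
Y_{n+1,i}^{(t)}-Y_{n,i}^{(t)}=\rho(t)^{-(n+1)}\sum_{u\in\mathbb{T}_n}(\mathbf{X}_u^{(t)} D_u^{(t)})_i,\qquad D_u^{(t)}:=\sum_{k=1}^{N_u}\mathbf{A}_{uk}^{(t)}\mathbf{V}(t)-\rho(t)\mathbf{V}(t),
\end{equation*}
and the identity $\mathbf{M}(t)\mathbf{V}(t)=\rho(t)\mathbf{V}(t)$ makes $(D_u^{(t)})_{u\in\mathbb{T}_n}$ iid mean-zero and independent of $\mathcal{F}_n$, while $\mathbf{X}_u^{(t)}$ is $\mathcal{F}_n$-measurable. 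The hypothesis $\max_i\mathbb{E}(Y_{1,i}^{(t)})^\alpha<\infty$ at once yields $\max_j\mathbb{E}|(D^{(t)})_j|^\alpha<\infty$.

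The central algebraic tool I extract is the matrix-Jensen estimate
\begin{equation*}
((\mathbf{X}_u^{(t)})_{ij})^s\leq p^{(n-1)(s-1)}(\mathbf{X}_u^{(st)})_{ij}\qquad(s\geq 1,\ u\in\mathbb{T}_n),
\end{equation*}
obtained by expanding $(\mathbf{X}_u^{(t)})_{ij}$ as a sum of $p^{n-1}$ entry-products along paths in $\{1,\ldots,p\}^{n-1}$ and applying Jensen to $x\mapsto x^s$. Combined with the branching identity $\mathbb{E}\sum_{u\in\mathbb{T}_n}(\mathbf{X}_u^{(r)})_{ij}=(\mathbf{M}(r)^n)_{ij}$ (from iterated conditioning on $\mathcal{F}_k$) and the Perron--Frobenius bound $(\mathbf{M}(r)^n)_{ij}\leq \rho(r)^n V_i(r)/V_j(r)$ (coming from $\mathbf{M}(r)^n\mathbf{V}(r)=\rho(r)^n\mathbf{V}(r)$ with $\mathbf{V}(r)$ strictly positive), this channels the factor $\rho(\alpha t)^n$ or $\rho(2t)^n$ into the final bound.

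For part (a), $\alpha\in(1,2]$, I apply von Bahr--Esseen's inequality conditionally on $\mathcal{F}_n$, bound each term by $|\sum_j(\mathbf{X}_u^{(t)})_{ij}(D_u^{(t)})_j|^\alpha\leq p^{\alpha-1}\sum_j(\mathbf{X}_u^{(t)})_{ij}^\alpha|(D_u^{(t)})_j|^\alpha$ (Jensen), take total expectation exploiting independence of $\mathbf{X}_u^{(t)}$ and $D_u^{(t)}$, and close with the matrix-Jensen estimate at $s=\alpha$; the $p^{(n-1)(\alpha-1)}$ it produces combines with the $p^{\alpha-1}$ from Jensen to give the asserted $p^{(\alpha-1)n}$, while Perron--Frobenius contributes $\rho(\alpha t)^n$ and the $\rho(t)^{-(n+1)\alpha}$ prefactor supplies $\rho(t)^{-\alpha n}$ (the remaining $\rho(t)^{-\alpha}$ is absorbed into $C$).

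For part (b), $\alpha>2$, I apply Burkholder's square-function inequality conditionally on $\mathcal{F}_n$, then conditional Minkowski in $L^{\alpha/2}$ (valid since $\alpha/2\geq 1$) to bound $\mathbb{E}[(\sum_u X_u^2)^{\alpha/2}\mid\mathcal{F}_n]\leq(\sum_u(\mathbb{E}[|X_u|^\alpha\mid\mathcal{F}_n])^{2/\alpha})^{\alpha/2}$; the inner conditional $\alpha$-moment is estimated as in (a), after which the $\ell^p$-norm monotonicity $(\sum_j c_j^\alpha)^{2/\alpha}\leq\sum_j c_j^2$ (valid for $\alpha\geq 2$) followed by the matrix-Jensen estimate at $s=2$ produces the factor $p^n\sum_u\sum_j(\mathbf{X}_u^{(2t)})_{ij}$. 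Finally, $Y_{n,i}^{(2t)}=\rho(2t)^{-n}\sum_u\sum_j V_j(2t)(\mathbf{X}_u^{(2t)})_{ij}\geq\rho(2t)^{-n}(\min_j V_j(2t))\sum_u\sum_j(\mathbf{X}_u^{(2t)})_{ij}$, so raising to the $\alpha/2$-th power and taking total expectation supplies the required $\mathbb{E}(Y_{n,i}^{(2t)})^{\alpha/2}$ factor. The main obstacle is the bookkeeping in (b): Burkholder's square-function bound must be threaded through conditional Minkowski and two Jensen-type steps so that the residual matrix product carries the exponent $2t$ (hence $\rho(2t)$) rather than $t$, and the $p$-powers from these operations must compose to exactly $p^{\alpha n/2}$; verifying the legitimacy of each conditional-independence step along the way is the delicate part of the argument.
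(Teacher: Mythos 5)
Your proposal is correct and follows essentially the same route as the paper: the same decomposition of the increment over $\mathbb{T}_n$ with the conditionally centered one-step terms, the same matrix-Jensen estimate $((\mathbf{X}_u^{(t)})_{ij})^s\le p^{(n-1)(s-1)}(\mathbf{X}_u^{(st)})_{ij}$, and the same Perron--Frobenius/branching-identity step to produce the eigenvalue ratios. The only (harmless) differences are in choice of tool: for $\alpha\in(1,2]$ you invoke von~Bahr--Esseen where the paper applies Burkholder together with the subadditivity of $x\mapsto x^{\alpha/2}$, and for $\alpha>2$ you go through conditional Minkowski in $L^{\alpha/2}$ to extract $\E|Y_{1,j}^{(t)}-V_j(t)|^\alpha$, whereas the paper uses a weighted-Jensen step $\E\bigl(\sum_u a_u b_u\bigr)^{\alpha/2}\le\E\bigl(\sum_u a_u\bigr)^{\alpha/2}\,\E b^{\alpha/2}$; these are interchangeable and give the same constants up to the factor absorbed into $C$.
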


\begin{proof}
We can decompose $Y_{n,i}^{(t)}$  as
\begin{eqnarray*}
Y_{n,i}^{(t)}=\frac{1}{\rho(t)^n}\sum_{j=1}^{p}\sum_{u\in\T_n}(\textbf{X}_u^{(t)})_{ij}V_j(t)
=\frac{1}{\rho(t)^n}\sum_{j=1}^{p}\sum_{u\in\T_{n-1}}(\textbf{X}_u^{(t)})_{ij}Y_{1,j}^{(t)}(u),
\end{eqnarray*}
where $\mathbf Y_1^{(t)}(u)$ is a version of $\mathbf Y_1^{(t)}$ at root $u$. Hence
$$Y_{n+1,i}^{(t)}-Y_{n,i}^{(t)}=\frac{1}{\rho(t)^n}\sum_{j=1}^{p}
\sum_{u\in\T_n}(\textbf{X}_u^{(t)})_{ij}\left[Y_{1,j}^{(t)}(u)-V_j(t)\right].$$
By Burkholder's inequality (see for example \cite{chow}),
\begin{eqnarray*}
\mathbb E\left|Y_{n+1,i}^{(t)}-Y_{n,i}^{(t)}\right|^\alpha&\leq& \frac{p^{\alpha-1}}{\rho(t)^{\alpha n}}\sum_{j=1}^{p}\mathbb E\left|
\sum_{u\in\T_n}(\textbf{X}_u^{(t)})_{ij}\left[Y_{1,j}^{(t)}(u)-V_j(t)\right]\right|^\alpha\\
&\leq&\frac{C}{\rho(t)^{\alpha n}}\sum_{j=1}^{p}\mathbb E\left(\sum_{u\in\T_n}
\left[(\textbf{X}_u^{(t)})_{ij}\right]^2\left[Y_{1,j}^{(t)}(u)-V_j(t)\right]^2\right)^{\alpha/2}.
\end{eqnarray*}
Noticing the fact that
\begin{eqnarray*}
\mathbb E\left(\sum_{u\in\T_n}
\left[(\textbf{X}_u^{(t)})_{ij}\right]^2\left[Y_{1,j}^{(t)}(u)-V_j(t)\right]^2\right)^{\alpha/2}
\leq\left\{\begin{array}{ll}
\mathbb E\left(\sum\limits_{u\in \T_n}\left[(\textbf{X}_u^{(t)})_{ij}\right]^\alpha\right) \mathbb E\left|Y_{1,j}^{(t)}-V_j(t)\right|^\alpha, & \;\text{for $\alpha\in(1,2]$,}\\
\mathbb E\left(\sum\limits_{u\in \T_n}\left[(\textbf{X}_u^{(t)})_{ij}\right]^2\right)^{\alpha/2} \mathbb E\left|Y_{1,j}^{(t)}-V_j(t)\right|^\alpha,&\;\text{for $\alpha>2$},
\end{array}
\right.
\end{eqnarray*}
we have
\begin{eqnarray}\label{MCES31}
\mathbb E\left|Y_{n+1,i}^{(t)}-Y_{n,i}^{(t)}\right|^\alpha
\leq\left\{\begin{array}{ll}
\frac{C}{\rho(t)^{\alpha n}}\sum\limits_{j=1}^{p}\mathbb E\left(\sum\limits_{u\in \T_n}\left[(\textbf{X}_u^{(t)})_{ij}\right]^\alpha\right) \mathbb E\left|Y_{1,j}^{(t)}-V_j(t)\right|^\alpha, & \text{for $\alpha\in(1,2]$,}\\
\frac{C}{\rho(t)^{\alpha n}}\sum\limits_{j=1}^{p}\mathbb E\left(\sum\limits_{u\in \T_n}\left[(\textbf{X}_u^{(t)})_{ij}\right]^2\right)^{\alpha/2} \mathbb E\left|Y_{1,j}^{(t)}-V_j(t)\right|^\alpha,& \text{for $\alpha>2$}.
\end{array}
\right.
\end{eqnarray}
Note that for $u\in\T_n$,
\begin{equation}\label{MCE1.2.5}
\left[(\textbf{X}_u^{(t)})_{ij}\right]^\alpha\leq p^{(\alpha-1)(n-1)}(\textbf{X}_u^{(\alpha t)})_{ij},\qquad \forall \alpha>1,
\end{equation}
and
\begin{equation}\label{MCE1.2.6}
\sum_{u\in\T_n}(\textbf{X}_u^{(t)})_{ij}\leq \frac{\rho(t)^n}{V_j(t)}Y_{n,i}^{(t)}.
\end{equation}
Thus
\begin{eqnarray*}
\mathbb E\sum_{u\in\T_n}\left[(\textbf{X}_u^{(t)})_{ij}\right]^\alpha \leq p^{(\alpha-1)(n-1)}\mathbb E\sum_{u\in\T_n}(\textbf{X}_u^{(\alpha t)})_{ij}
\leq\frac{V_i(\alpha t)}{V_j(\alpha t)}p^{(\alpha-1)(n-1)}\rho(\alpha t)^n.
\end{eqnarray*}
Applying this inequality to the first inequality of (\ref{MCES31}), and noticing that
$\max_i\mathbb E\left[Y_{1,i}^{(t)}\right]^\alpha<\infty$, we obtain (\ref{MCE1.2.1}). To get (\ref{MCE1.2.2}), we only need
to see that
\begin{eqnarray*}
\mathbb E\left(\sum_{u\in\T_n}\left[(\textbf{X}_u^{(t)})_{ij}\right]^2\right)^{\alpha/2}&\leq&p^{\alpha (n-1)/2}\mathbb E\left(\sum_{u\in\T_n}(\textbf{X}_u^{(2 t)})_{ij}\right)^{\alpha/2}\\
&\leq&V_j(2 t)^{-\alpha/2} p^{\alpha (n-1)/2}\rho(2 t)^{\alpha n/2}\mathbb E\left[Y_{n,i}^{(2t)}\right]^{\alpha/2},
\end{eqnarray*}
and combing this inequality with the second inequality of (\ref{MCES31}).
\end{proof}

\begin{lem}\label{MCL1.2.2}
Let $\alpha>1$. Fix $t\in\mathbb R$. If $\max\limits_i\left[\E Y_{1,i}^{(t)}\right]^\alpha<\infty$, then for each $i=1,\cdots,p$,
\begin{equation}\label{MCE1.2.7}
\mathbb E\left[Y_{n,i}^{(t)}\right]^\alpha\leq Cn^{1+\frac{2^m-1}{2^m}\alpha}\left[\max\{1,\; p^{\alpha-1}\frac{\rho(\alpha t)}{\rho(t)^\alpha},\;
p^{\frac{2^l-1}{2^l}\alpha}\frac{\rho(2^lt)^{\alpha/2^l}}{\rho(t)^\alpha},l=1,2,\cdots,m\}\right]^n
\end{equation}
for $\alpha\in(2^m,2^{m+1}]$, where $m\geq0$ is an integer.
\end{lem}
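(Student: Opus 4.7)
The plan is to prove the estimate by induction on $m$, using Burkholder's inequality applied to the martingale $\{Y_{n,i}^{(t)}\}$ together with Lemma \ref{MCL1.2.1}.

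For the base case $m=0$ (so $\alpha\in(1,2]$), decompose
$$Y_{n,i}^{(t)} \;=\; V_i(t) + \sum_{k=0}^{n-1}(Y_{k+1,i}^{(t)}-Y_{k,i}^{(t)})$$
and apply Burkholder's inequality to the centered martingale to obtain $\mathbb{E}[Y_{n,i}^{(t)}]^\alpha \leq C\,\mathbb{E}(\sum_{k=0}^{n-1}(Y_{k+1,i}^{(t)}-Y_{k,i}^{(t)})^2)^{\alpha/2} + C V_i(t)^\alpha$. Since $\alpha/2\leq 1$, the subadditivity inequality $(x_1+\cdots+x_n)^{\alpha/2}\leq x_1^{\alpha/2}+\cdots+x_n^{\alpha/2}$ on $\mathbb{R}_+$ reduces this to $C\sum_{k=0}^{n-1}\mathbb{E}|Y_{k+1,i}^{(t)}-Y_{k,i}^{(t)}|^\alpha + CV_i(t)^\alpha$; inserting Lemma \ref{MCL1.2.1}(a) produces a geometric sum bounded by $Cn\max\{1,p^{\alpha-1}\rho(\alpha t)/\rho(t)^\alpha\}^n$, matching the target for $m=0$.

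For the inductive step, assume the bound for $m$ and prove it for $m+1$, so $\alpha\in(2^{m+1},2^{m+2}]$ and $\alpha/2\in(2^m,2^{m+1}]$. Burkholder gives the same reduction, but now $\alpha/2>1$, so H\"older's inequality yields $(\sum a_k^2)^{\alpha/2}\leq n^{\alpha/2-1}\sum a_k^\alpha$ and hence
$$\mathbb{E}[Y_{n,i}^{(t)}]^\alpha \;\leq\; Cn^{\alpha/2-1}\sum_{k=0}^{n-1}\mathbb{E}|Y_{k+1,i}^{(t)}-Y_{k,i}^{(t)}|^\alpha + CV_i(t)^\alpha.$$
Apply Lemma \ref{MCL1.2.1}(b) to each difference, which introduces a factor $\mathbb{E}[Y_{k,i}^{(2t)}]^{\alpha/2}$; the induction hypothesis (legal because $\alpha/2\in(2^m,2^{m+1}]$, with $2t$ in place of $t$) controls this factor by $Ck^{q}(\text{inductive max})^k$ where $q=1+(2^m-1)\alpha/2^{m+1}$. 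The elementary series bound $\sum_{k=0}^{n-1}k^q\mu^k\leq Cn^{q+1}(\max\{1,\mu\})^n$ then consolidates the exponent of $n$.

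The crux is to verify that multiplying the factor $p^{\alpha/2}\rho(2t)^{\alpha/2}/\rho(t)^\alpha$ coming from Lemma \ref{MCL1.2.1}(b) into each element of the inductive max produces an element of the max list for $\alpha$: the inductive constant $1$ becomes the $l=1$ term $p^{\alpha/2}\rho(2t)^{\alpha/2}/\rho(t)^\alpha$ of the new list; the inductive $l=0$ term $p^{\alpha/2-1}\rho(\alpha t)/\rho(2t)^{\alpha/2}$ becomes $p^{\alpha-1}\rho(\alpha t)/\rho(t)^\alpha$; and the inductive $l$-th term (for $1\leq l\leq m$) shifts to the new $(l+1)$-th term $p^{(2^{l+1}-1)\alpha/2^{l+1}}\rho(2^{l+1}t)^{\alpha/2^{l+1}}/\rho(t)^\alpha$. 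Collecting powers of $n$ gives $(\alpha/2-1)+(q+1)=1+(2^{m+1}-1)\alpha/2^{m+1}$, completing the induction. The main obstacle is precisely this arithmetic bookkeeping, i.e.\ checking closure of the max list under the inductive operation; the probabilistic ingredients (Burkholder plus the previous lemma) do the rest.
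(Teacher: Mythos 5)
Your proof is correct and follows essentially the same route as the paper: induction on $m$, with Burkholder's inequality reducing the problem to the increments, Lemma \ref{MCL1.2.1} bounding each increment, and the induction hypothesis (applied at $2t$ and exponent $\alpha/2$) controlling the factor $\mathbb{E}[Y_{k,i}^{(2t)}]^{\alpha/2}$; your bookkeeping of the max list and of the exponent of $n$ matches the paper's. The only cosmetic difference is that in the inductive step you pass from the square function to the increments via H\"older's inequality (picking up $n^{\alpha/2-1}$ outside the sum), whereas the paper uses Minkowski's inequality in $L^{\alpha/2}$; both yield the same polynomial factor $n^{1+\frac{2^{m+1}-1}{2^{m+1}}\alpha}$.
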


\begin{proof}
At first, for $m=0$, $\alpha\in(1,2]$. Applying Burkholder's inequality to the martingale $\{Y_{n,i}^{(t)}\}$ and by
Lemma \ref{MCL1.2.1},
\begin{eqnarray*}
\mathbb E\left|Y_{n+1,i}^{(t)}-1\right|^\alpha&\leq& C\sum_{k=0}^{n-1}\mathbb E\left|Y_{k+1,i}^{(t)}-Y_{k,i}^{(t)}\right|^\alpha\\
&\leq&C\sum_{k=0}^{n-1}p^{(\alpha-1)k}\left(\frac{\rho(\alpha t)}{\rho(t)^\alpha}\right)^k\\
&\leq& Cn\left[\max\{1,p^{\alpha-1}\frac{\rho(\alpha t)}{\rho(t)^\alpha}\}\right]^n.
\end{eqnarray*}
Thus
$$\mathbb E\left[Y_{n,i}^{(t)}\right]^\alpha\leq Cn\left[\max\{1,p^{\alpha-1}\frac{\rho(\alpha t)}{\rho(t)^\alpha}\}\right]^n.$$
So (\ref{MCE1.2.7}) holds for $m=0$. Now suppose that (\ref{MCE1.2.7}) holds for some $m\geq0$, we shall
prove it still holds for $m+1$. For $\alpha\in(2^{m+1},2^{m+2}]$, we have $\alpha/2\in(2^{m},2^{m+1}]$.
Since $\max\limits_i\mathbb E\left[Y_{1,i}^{(t)}\right]^\alpha<\infty$ ensures that $\max\limits_i\mathbb E\left[Y_{1,i}^{(2t)}\right]^{\alpha/2}<\infty$, by induction, we have
\begin{equation}\label{MCE1.2.8}
\mathbb E\left[Y_{1,i}^{(2t)}\right]^{\alpha/2}\leq Ck^{1+\frac{2^m-1}{2^{m+1}}\alpha}\left[\max\{1,\; p^{\alpha/2-1}
\frac{\rho(\alpha t)}{\rho(2t)^{\alpha/2}},\;
p^{\frac{2^l-1}{2^{l+1}}\alpha}\frac{\rho(2^{l+1}t)^{\alpha/2^{l+1}}}{\rho(2t)^{\alpha/2}},l=1,2,\cdots,m\}\right]^k.
\end{equation}
Hence combing (\ref{MCE1.2.8}) with (\ref{MCE1.2.2}) we get
\begin{equation}\label{MCE1.2.9}
\mathbb E\left|Y_{k+1,i}^{(t)}-Y_{k,i}^{(t)}\right|^\alpha \leq Ck^{1+\frac{2^m-1}{2^{m+1}}\alpha}\left[\max\{ p^{\alpha-1}\frac{\rho(\alpha t)}{\rho(t)^\alpha},\;
p^{\frac{2^l-1}{2^l}\alpha}\frac{\rho(2^lt)^{\alpha/2^l}}{\rho(t)^\alpha},l=1,2,\cdots,m+1\}\right]^k.
\end{equation}
By Burkholder's inequality and Minkowski's inequality, and applying (\ref{MCE1.2.9}),

\begin{eqnarray*}
\mathbb E\left|Y_{n,i}^{(t)}-1\right|^\alpha&\leq&C\left(\sum_{k=0}^{n-1}\left(\mathbb E\left|Y_{k+1,i}^{(t)}-Y_{k,i}^{(t)}\right|^\alpha\right)^{2/\alpha}\right)^{\alpha/2}\\
&\leq& C\left(\sum_{k=0}^{n-1}k^{(1+\frac{2^m-1}{2^{m+1}}\alpha)\frac{2}{\alpha}}
\left[\max\{ p^{\alpha-1}\frac{\rho(\alpha t)}{\rho(t)^\alpha},\;p^{\frac{2^l-1}{2^l}\alpha}
\frac{\rho(2^lt)^{\alpha/2^l}}{\rho(t)^\alpha},l=1,\cdots,m+1\}\right]^{2k/\alpha}\right)^{\alpha/2}\\
&\leq& Cn^{1+\frac{2^m-1}{2^{m+1}}\alpha+\frac{\alpha}{2}}
\left[\max\{1, p^{\alpha-1}\frac{\rho(\alpha t)}{\rho(t)^\alpha},\;p^{\frac{2^l-1}{2^l}\alpha}
\frac{\rho(2^lt)^{\alpha/2^l}}{\rho(t)^\alpha},l=1,\cdots,m+1\}\right]^{n},
\end{eqnarray*}
which implies that (\ref{MCE1.2.7}) holds for $m+1$. This completes the proof.
\end{proof}

\bigskip

\noindent\textbf{Remark 3.1. } Lemmas \ref{MCL1.2.1}(b) and \ref{MCL1.2.2} also holds with $\beta$ in place of $2$ for any $\beta\in(1,2]$. To see this fact, observing that
$$\mathbb E\left(\sum_{u\in\T_n}\left[(\textbf{X}_u^{(t)})_{ij}\right]^2\right)^{\alpha/2}
\leq \mathbb E\left(\sum_{u\in\T_n}\left[(\textbf{X}_u^{(t)})_{ij}\right]^\beta\right)^{\alpha/\beta}$$
in the proof of Lemma \ref{MCL1.2.1}, one just need to repeat the proofs  of Lemmas \ref{MCL1.2.1}(b) and \ref{MCL1.2.2} with $\beta$ in place of $2$ for the case where $\alpha>2$.

\section {Proof of Theorem \ref{MCT1}}
Now we give the proof of Theorem \ref{MCT1}, by using the inequalities for  the martingale $\{\mathbf Y_n^{(t)}\}$ (Lemmas \ref{MCL1.2.1} and \ref{MCL1.2.2}) which are obtained in Section 3.

\begin{proof}[Proof of Theorem \ref{MCT1}]The proof of (a) is composed by two steps.

Step 1: we will show that if $\mathbb E\|\sum\limits_{k=1}^N\textbf{A}_k\|^\alpha<\infty$ and $p^{(\alpha-1)}\rho(\alpha)<1$,
then for each $i$, $\mathbb EY_i=V_i$ and  $\mathbb E[Y_i]^\alpha<\infty$, which implies that $\mathbb E\mathbf Y=\mathbf V$ and $0<\E\|\mathbf Y\|^\alpha<\infty$. In fact, it suffices to prove that $\sup\limits_n\mathbb E[Y_{n,i}]^\alpha<\infty$ for each $i$, which is equivalent to $Y_{n,i}\rightarrow Y_i$ in $L^\alpha$, so that $\mathbb EY_i=V_i$ and $0<\mathbb E[Y_i]^\alpha<\infty$. The condition $\E\|\sum\limits_{k=1}^N\textbf{A}_k\|^\alpha<\infty$, or equivalently, $\max\limits_i\mathbb E[Y_{1,i}]^\alpha<\infty$, ensures the finiteness of $\textbf{M}(t)$
for all $t\in[1,\alpha]$. Moreover, since $\textbf{M}(1)=\textbf{M}$ is strictly positive, by the log-convexity of $(\textbf{M}(t))_{ij}$, we have $\textbf{M}(t)$ is strictly positive for all $t\in[1,\alpha]$, so $\rho(t)$ exists for all $t\in[1,\alpha]$. For $\alpha\in(1,2]$, by Burkholder's inequality and Lemma \ref{MCL1.2.1},
$$\sup_n\mathbb E|Y_{n,i}-1|^\alpha\leq C\sum_{n=0}^{\infty}\mathbb E|Y_{n+1,i}-Y_{n,i}|^2\leq C\sum_{n=0}^{\infty}p^{(\alpha-1)n}\rho(\alpha)^n<\infty.$$
For $\alpha>2$, by Burkholder's inequality and Minkowski's inequality,
$$\sup_n\mathbb E|Y_{n,i}-1|^\alpha\leq C\left(\sum_{n=0}^{\infty}(\mathbb E|Y_{n+1,i}-Y_{n,i}|^\alpha)^{2/\alpha}\right)^{\alpha/2}.$$
We shall show the series $\sum\limits_{n=0}^{\infty}(\mathbb E|Y_{n+1,i}-Y_{n,i}|^\alpha)^{2/\alpha}<\infty$.
Observing that $\max\limits_i\mathbb E\left[Y_{1,i}^{(2)}\right]^{\alpha/2}<\infty$ since  $\max\limits_i\mathbb E(Y_{1,i})^{\alpha}<\infty$, by Lemma \ref{MCL1.2.2}, we have
for $\alpha\in(2^m,2^{m+1}]$ ($m\geq1$ is an integer),
\begin{equation}\label{MCE1.2.11}
\mathbb E\left[Y_{1,i}^{(2)}\right]^{\alpha/2}\leq Cn^\gamma\left[\max\{1,p^{\alpha/2-1}\frac{\rho(\alpha)}{\rho(2)^{\alpha/2}},
p^{\frac{2^l-1}{2^{l+1}}\alpha}\frac{\rho(2^{l+1})^{\alpha/2^{l+1}}}{\rho(2)^{\alpha/2}},l=1,\cdots,m-1\}\right]^n,
\end{equation}
where $\gamma=1+\frac{2^{m-1}-1}{2^m}\alpha\leq\frac{\alpha}{2}$.
By Lemma \ref{MCL1.2.1} and (\ref{MCE1.2.11}),
\begin{eqnarray*}
\mathbb E|Y_{n+1,i}-Y_{n,i}|^\alpha&\leq&Cp^{\alpha n/2}\rho(2)^{\alpha n/2}\mathbb E\left[(Y_n^{(2)})^i\right]^{\alpha/2}\\
&\leq&Cn^{\alpha/2}\left[\max\{p^{\alpha-1}\rho(\alpha),p^{\frac{2^l-1}{2^l}\alpha}\rho(2^l)^{\alpha/2^l},l=1,\cdots,m\}\right]^n.
\end{eqnarray*}
Therefore
$$\sum_{n=0}^{\infty}\left(\mathbb E|Y_{n+1,i}-Y_{n,i}|^\alpha\right)^{2/\alpha}\leq C\sum_nn\left[\max\{p^{\alpha-1}\rho(\alpha),p^{\frac{2^l-1}{2^l}\alpha}\rho(2^l)^{\alpha/2^l},l=1,\cdots,m\}\right]^{2n/\alpha}.$$
The series in the right side of the inequality above converges if and only if
\begin{equation}\label{MCE1.2.12}
\max\{p^{\alpha-1}\rho(\alpha),p^{\frac{2^l-1}{2^l}\alpha}\rho(2^l)^{\alpha/2^l},l=1,\cdots,m\}<1.
\end{equation}
Note that $\rho(t)$ is log-convex since $(\textbf{M}(t))_{ij}$ is log-convex (Kingman 1961).
We have $\forall \beta\in(1,\alpha)$, $\rho(\beta)\leq\rho(\alpha)^{(\beta-1)/(\alpha-1)}$. Thus
$$p^{\frac{\beta-1}{\beta}\alpha}\rho(\beta)^{\alpha/\beta}
\leq\left[p^{\alpha-1}\rho(\alpha)\right]^{\frac{\alpha(\beta-1)}{\beta(\alpha-1)}}<1,$$
and so (\ref{MCE1.2.12}) is true from this fact.

Step 2: we will prove that if $\E\|\sum\limits_{k=1}^N\textbf{A}_k\|^\alpha<\infty$ and $p^{(\alpha-1)}\rho_r(\alpha)<1$ for some $r$,
then for each $i$, $\mathbb EY_i=V_i$ and $\mathbb E[Y_i]^\alpha<\infty$. Let $\bar{N}:=N_r$ be the population of the $r$-generation and
$\bar{\textbf{A}}_i:=\textbf{X}_{u^i}$, where $u^i$ denotes the $i$-th particle of the $r$-generation. We consider
$(\bar{N},\bar{\textbf{A}}_1,\bar{\textbf{A}}_2,\cdots)$. Clearly, $\bar{\textbf{M}}:=\mathbb E\sum\limits_{i=1}^{\bar{N}}\bar{\textbf{A}}_i$ is finite and strictly positive with the maximum-modulus eigenvalue $1$ and the corresponding eigenvectors $\bar{\textbf{U}}=\textbf{U},\bar{\textbf{V}}=\textbf{V}$. Let $\mathbb{\bar T}$ be the corresponding Galton-Watson tree and $\mathbb{\bar T}_n=\{u\in \mathbb{\bar T}: |u|=n\}$. Define
\begin{equation*}
\bar{\textbf{Y}}_n:=\sum_{u\in\mathbb{\bar T}_n}\bar{\textbf{X}}_u\textbf{V}\quad\text{with}\;
\bar{\textbf{X}}_u:=\bar{\textbf{A}}_{u_1}\cdots\bar{\textbf{A}}_{u_1\cdots u_n}\;
\text{for}\;u\in\mathbb{\bar T}_n.
\end{equation*}
Similarly, we define $\bar{\textbf{M}}(t)$, $\bar{\textbf{Y}}_n(t)$, $\bar{\rho}(t)$ and $\bar{\textbf{V}}(t)$ like Section 2. It is easy to see that
$\bar{\textbf{Y}}_n$ has the same distribution as $\textbf{Y}_{nr}$, therefore,  $\bar{\textbf{Y}}:=\lim\limits_{n\rightarrow\infty}\bar{\textbf{Y}}_n$ a.s. has the same distribution as $\textbf{Y}$.  To get
$\mathbb EY_i=V_i$ and $\E[Y_i]^\alpha<\infty$, by Step 1, we only need to verify $\E\|\sum\limits_{i=1}^{\bar{N}}\bar{\textbf{A}}_i\|^\alpha<\infty$ and $p^{(\alpha-1)}\bar{\rho}(\alpha)<1$. The latter is obvious since $\bar{\textbf{M}}(t)=\textbf{M}_r(t)$ and so $\bar{\rho}(\alpha)=\rho_r(\alpha)$. To verify the former, we notice that $\mathbb E\|\sum\limits_{i=1}^{\bar{N}}\bar{\textbf{A}}_i\|^\alpha<\infty$ is equivalent to $\max\limits_i\mathbb E[\bar{Y}_{1,i}]^\alpha<\infty$,  which is true by Lemma \ref{MCL1.2.2}, since $\mathbb E[\bar {Y}_{1,i}]^\alpha=\mathbb E[ Y_{r,i}]^\alpha$.

Now we prove the converse (b). Suppose that $0<\E\|\mathbf Y\|^\alpha<\infty$, which implies that $\max\limits_i\mathbb E[Y_i]^\alpha<\infty$ and $\mathbf Y$ is non-degenerate. As $\mathbf Y$  is a non-trivial solution of the equation (\ref{E}), we have
$\E \mathbf Y=\mathbf M \E\mathbf Y$ with $\E \mathbf Y\neq \mathbf 0$, which means that $\E \mathbf Y$ is a  non-trivial eigenvector corresponding to the eigenvalue $1$, and so $\E \mathbf Y=c\mathbf Y$ for some constant $c>0$. By equation (\ref{E}), for each $i$,
$$Y_i=\sum_{k=1}^N(\mathbf A_k \mathbf Y(k))_i=\sum_{k=1}^N\sum_{j=1}^p(\mathbf A_k)_{ij}Y_i(k).$$
By Jensen's inequality, for each $i$,
\begin{eqnarray*}
\E[Y_i]^\alpha&\geq & \E\left[\E\left(\sum_{k=1}^N\sum_{j=1}^p(\mathbf A_k)_{ij}Y_j(k)\big{|}\mathcal F_1\right)\right]^\alpha\\
&=&\E\left[\E\sum_{k=1}^N\sum_{j=1}^p(\mathbf A_k)_{ij}\E Y_j\right]^\alpha
\\&=&c^\alpha\E\left[\E\sum_{k=1}^N(\mathbf A_k \mathbf V)_i\right]^\alpha\\
&=&c^\alpha\E[Y_{1,i}]^\alpha.
\end{eqnarray*}
Thus $\max\limits_i\mathbb E[Y_i]^\alpha<\infty$ implies $\max\limits_i\mathbb E[Y_{1,i}]^\alpha<\infty$, or equivalently, $\E\|\sum_{i=1}^N\textbf{A}_i\|^\alpha<\infty$.

Next, we consider $\rho_n(t)$.
Since
\begin{eqnarray}\label{MCE1.2.13}
[Y_i]^\alpha=\left[\sum_{j=1}^p\sum_{u\in\T_n}(\textbf{X}_u)_{ij}Y_j(u)\right]^\alpha\geq
\sum_{j=1}^p\sum_{u\in\T_n}\left[(\textbf{X}_u)_{ij}\right]^\alpha\left[ Y_j(u)\right]^\alpha,
\end{eqnarray}
we obtain
\begin{equation} \label{MCE1.2.14}
\E[Y_i]^\alpha \geq
\sum_{j=1}^p\E\sum_{u\in\T_n}\left[(\textbf{X}_u)_{ij}\right]^\alpha \E[ Y_j]^\alpha=\sum_{j=1}^p(\textbf{M}_n(\alpha))_{ij} \E[ Y_j]^\alpha.
\end{equation}
Thus
\begin{equation} \label{MCE1.2.15}
\sum_{i=1}^pU_{n,i}(\alpha)\E[Y_i]^\alpha\geq\sum_{j=1}^p \E[ Y_j]^\alpha\sum_{i=1}^pU_{n,i}(\alpha)
(\textbf{M}_n(\alpha))_{ij}=\rho_n(\alpha)\sum_{j=1}^pU_{n,j}(\alpha)\E[Y_j]^\alpha,
\end{equation}
which leads to $\rho_n(\alpha)\leq1$. If additionally (\ref{MCEC2}) holds, then for each $i$, $\P\left(\sum\limits_{j=1}^p\sum\limits_{u\in \T_n} \mathbf{1}_{\{(\textbf{X}_u)_{ij}>0\}}=0\; or \;1\right)<1$. Hence the
strictly inequality in (\ref{MCE1.2.13})
holds with positive probability, and so both (\ref{MCE1.2.14}) and (\ref{MCE1.2.15}) are  strictly inequalities, which leads to $\rho_n(\alpha)<1$.

\end{proof}


\section {Proof of Theorems \ref{MCT2} and \ref{MCT3}}

We will prove  Theorems \ref{MCT2} and \ref{MCT3}  based on the equation (\ref{E}), with ideas  from Liu \cite{liu2}. Recall that $\phi(\mathbf t)=\E e^{-\mathbf t\cdot\mathbf Z}$ is the Laplace transform of the non-trivial solution $\mathbf Z$ to the equation (\ref{E}). By (\ref{E}), $\phi(\mathbf t)$ satisfies  the functional equation
\begin{equation}\label{E1}
\phi(\mathbf t)=\E\prod_{k=1}^N\phi(\mathbf t \mathbf A_k).
\end{equation}
Our proofs are based on this equation.

\bigskip
To prove Theorem \ref{MCT2}, the two lemmas below  are  necessary.

\begin{lem}\label{MCLN1}
Let $\phi:\mathbb R_+^p\mapsto\mathbb R_+$ be a bounded function, and $\mathbf A=(a_{ij})$ be a non-zero matrix such that for some $0<q<1$, $t_\varepsilon>0$ and all $\mathbf t$ satisfying $\|\mathbf t\|>t_\varepsilon$,
\begin{equation}\label{MCLE41}
\phi(\mathbf t)\leq q \E\phi(\mathbf t\mathbf A).
\end{equation}
 If $q\E\left(\min\limits_i\sum\limits_j a_{ij}\right)^{-\lambda}<1$, then
 $\phi(\mathbf t)=O(\|\mathbf t\|^{-\lambda})$ ($\|\mathbf t\|\rightarrow\infty$).
\end{lem}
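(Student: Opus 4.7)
The plan is to iterate the inequality (\ref{MCLE41}) along a product of independent copies of $\mathbf A$ and then bound the resulting expectation by a Markov-type estimate. Let $M:=\sup_{\mathbf t}\phi(\mathbf t)<\infty$, let $\mathbf A_1,\mathbf A_2,\ldots$ be i.i.d.\ copies of $\mathbf A$, and set $T_k:=\mathbf t\,\mathbf A_1\cdots\mathbf A_k$ with $T_0=\mathbf t$. On the event $\{\|T_{k-1}\|> t_\varepsilon\}$, which is $\sigma(\mathbf A_1,\ldots,\mathbf A_{k-1})$-measurable, one may apply (\ref{MCLE41}) to $T_{k-1}$ to obtain $\phi(T_{k-1})\le q\,\E[\phi(T_k)\mid \mathbf A_1,\ldots,\mathbf A_{k-1}]$, while on the complement the trivial bound $\phi(T_{k-1})\le M$ is used. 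Iterating for $n$ steps and letting $n\to\infty$ (justified by $q^n\to 0$ together with monotone convergence) yields
\[
\phi(\mathbf t)\ \le\ M\,\E\bigl[q^{\tilde N}\bigr],\qquad \tilde N:=\inf\{k\ge 1:\ \|T_k\|\le t_\varepsilon\},
\]
with the convention $q^{\infty}=0$.

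Next I would pass from the norms $\|T_k\|$ to the scalar product $\Pi_k:=\xi_1\cdots\xi_k$, where $\xi_i:=\min_r\sum_j(\mathbf A_i)_{rj}$. For any nonnegative row vector $\mathbf s\in\mathbb R_+^p$,
\[
\|\mathbf s\,\mathbf A\|\ =\ \sum_j\sum_r s_r (\mathbf A)_{rj}\ =\ \sum_r s_r\sum_j(\mathbf A)_{rj}\ \ge\ \xi(\mathbf A)\,\|\mathbf s\|,
\]
(which uses the $L^1$ vector norm declared in the paper), so by induction $\|T_k\|\ge\Pi_k\|\mathbf t\|$. Consequently $\{\|T_k\|\le t_\varepsilon\}\subseteq\{\Pi_k\le t_\varepsilon/\|\mathbf t\|\}$, whence $\tilde N\ge\tilde N^*:=\inf\{k\ge 1:\Pi_k\le t_\varepsilon/\|\mathbf t\|\}$ and therefore $q^{\tilde N}\le q^{\tilde N^*}$.

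Finally, Markov's inequality and the independence of the $\xi_i$ give, for each $k\ge 1$,
\[
\P(\tilde N^*=k)\ \le\ \P\bigl(\Pi_k^{-\lambda}\ge(\|\mathbf t\|/t_\varepsilon)^\lambda\bigr)\ \le\ \Bigl(\frac{t_\varepsilon}{\|\mathbf t\|}\Bigr)^{\lambda}\E\Pi_k^{-\lambda}\ =\ \Bigl(\frac{t_\varepsilon}{\|\mathbf t\|}\Bigr)^{\lambda}(\E\xi^{-\lambda})^k.
\]
Multiplying by $q^k$ and summing over $k$,
\[
\E\bigl[q^{\tilde N^*}\bigr]\ \le\ \Bigl(\frac{t_\varepsilon}{\|\mathbf t\|}\Bigr)^{\lambda}\sum_{k=1}^\infty(q\,\E\xi^{-\lambda})^k\ =\ \frac{q\,\E\xi^{-\lambda}}{1-q\,\E\xi^{-\lambda}}\cdot\frac{t_\varepsilon^{\lambda}}{\|\mathbf t\|^{\lambda}},
\]
and the geometric series converges precisely because of the standing hypothesis $q\,\E\xi^{-\lambda}<1$. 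Combining this with the iteration bound produces the claim $\phi(\mathbf t)=O(\|\mathbf t\|^{-\lambda})$.

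The main obstacle is the bookkeeping of the iteration: one must alternate properly between invoking (\ref{MCLE41}) on the event where $\|T_{k-1}\|> t_\varepsilon$ and using the uniform bound $M$ on the complement, while confirming that the residual term $q^n\,\E\bigl[\phi(T_n)\mathbf 1_{\|T_j\|>t_\varepsilon,\ j<n}\bigr]\le Mq^n$ vanishes as $n\to\infty$. Once the clean representation $\phi(\mathbf t)\le M\,\E[q^{\tilde N}]$ is established, the remaining steps (reducing $\tilde N$ to the product $\Pi_k$ and applying Markov) are essentially routine.
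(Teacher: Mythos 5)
Your proof is correct and follows essentially the same route as the paper's: both iterate (\ref{MCLE41}) along products of i.i.d.\ copies of $\mathbf A$, use the bound $\|\mathbf t\mathbf A\|\ge\|\mathbf t\|\min_i\sum_j a_{ij}$, and conclude via the geometric series with ratio $q\,\E\bigl(\min_i\sum_j a_{ij}\bigr)^{-\lambda}<1$. The only difference is bookkeeping: the paper absorbs the region $\|\mathbf t\|\le t_\varepsilon$ into an additive penalty $C\|\mathbf t\|^{-\lambda}$ valid for all $\mathbf t\neq\mathbf 0$ and iterates that perturbed inequality, whereas you keep the constraint via a first-passage time and then apply Markov's inequality; both are valid.
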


\begin{proof}
Assume that $\phi$ is bounded by a constant $K$. For  $\mathbf t\neq \mathbf 0$, if $\|\mathbf t\|\leq t_\varepsilon$, then $\phi(\mathbf t)\leq K\leq K t_\varepsilon^\lambda \|\mathbf t\|^{-\lambda}$, which yields by (\ref{MCLE41})
\begin{equation}\label{MCLE42}
\phi(\mathbf t)\leq q \E\phi(\mathbf t\mathbf A)+ C  \|\mathbf t\|^{-\lambda},\quad \text{for all $\mathbf t\neq \mathbf 0$,}
\end{equation}
where $C$ is a general positive constant. Let $\{\mathbf A_k\}$ be a family of i.i.d copies of $\mathbf A$. By induction on (\ref{MCLE42}),
\begin{equation}\label{MCLE43}
\phi(\mathbf t)\leq q^n \E\phi(\mathbf t\mathbf A_1\cdots \mathbf A_n)+ C  \left[\sum_{k=1}^{n-1}\left(q^{k-1}\E\|\mathbf t\mathbf A_1\cdots \mathbf A_{k-1}\|\right)^{-\lambda}+\|\mathbf t\|^{-\lambda}\right],\quad \text{for all $\mathbf t\neq \mathbf 0$.}
\end{equation}
Note that for any matrix $\mathbf A=(a_{ij})$ and vector $\mathbf t$, we have
$$\|\mathbf t\mathbf A\|=\sum_j(\mathbf t \mathbf A)_j=\sum_j\sum_i t_i a_{ij}\geq \sum_i t_i \min_i\sum_j a_{ij}=\|\mathbf t\|\left(\min_i\sum_j a_{ij}\right).$$
Thus by the independency of $\{\mathbf A_k\}$,
\begin{equation}\label{MCLE44}
\E\|\mathbf t\mathbf A_1\cdots \mathbf A_{k}\|^{-\lambda}
\leq \|\mathbf t\|^{-\lambda}\E\left(\prod_{l=1}^k\left(\min_i\sum_j(\mathbf A_l)_{ij}\right)\right)^{-\lambda}=\|\mathbf t\|^{-\lambda}\left[\E\left(\min_i\sum_j a_{ij}\right)^{-\lambda }\right]^k.
\end{equation}
Combing (\ref{MCLE44}) with (\ref{MCLE43}) and letting $n\rightarrow\infty$ leads to $\phi(\mathbf t)=O(\|\mathbf t\|^{-\lambda})$ ($\|\mathbf t\|\rightarrow\infty$).
\end{proof}

\bigskip

\begin{lem}[\cite{liu1999}, Lemma 4.4]\label{MCLN2}
\label{MDL3.2} Let $X$ be a positive
random variable. For $0<a<\infty$, consider the following
statements:\newline
\begin{equation*}
\begin{array}{ll}
(i)\;\mathbb{E}X^{-a}<\infty; & (ii)\;\mathbb{E}e^{-tX}=O(t^{-a})(t\rightarrow\infty); \\
(iii)\;\mathbb{P}(X\leq x)=O(x^a)(x\rightarrow0); & (iv)\;\forall b\in(0,a),
\mathbb{E}X^{-b}<\infty. \\
\end{array}
\end{equation*}
Then the following implications hold: (i) $\Rightarrow$ (ii) $\Leftrightarrow$ (iii) $\Rightarrow$ (iv).
\end{lem}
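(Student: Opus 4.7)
The plan is to establish the four implications in a cycle: (i) $\Rightarrow$ (ii) $\Rightarrow$ (iii) $\Rightarrow$ (ii) (closing the equivalence) and (iii) $\Rightarrow$ (iv). Each step reduces to an elementary manipulation relating the Laplace transform, the lower tail and the negative moment. No deep tools are required; the only choice to be made carefully is the global bound in one direction.

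For (i) $\Rightarrow$ (ii), I would use the scalar inequality $\sup_{u\geq 0}u^a e^{-u}=(a/e)^a=:M_a<\infty$. Substituting $u=tX$ and dividing by $(tX)^a$ gives the pointwise bound $e^{-tX}\leq M_a\,t^{-a}X^{-a}$; taking expectation yields $\mathbb{E}e^{-tX}\leq M_a(\mathbb{E}X^{-a})\,t^{-a}$, which is exactly (ii).

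For (ii) $\Leftrightarrow$ (iii), the forward direction is a Chernoff bound: for any $t,x>0$,
\begin{equation*}
\mathbb{P}(X\leq x)=\mathbb{P}(e^{-tX}\geq e^{-tx})\leq e^{tx}\mathbb{E}e^{-tX},
\end{equation*}
and choosing $t=1/x$ gives $\mathbb{P}(X\leq x)\leq e\cdot\mathbb{E}e^{-X/x}=O(x^a)$. For the converse, I would start from the integration-by-parts formula $\mathbb{E}e^{-tX}=t\int_0^\infty e^{-ty}\mathbb{P}(X\leq y)\,dy$. The hypothesis (iii) gives $\mathbb{P}(X\leq y)\leq Cy^a$ for $y$ in some neighborhood of $0$; combined with the trivial bound $\mathbb{P}(X\leq y)\leq 1$ and the fact that $a>0$, this upgrades to a global estimate $\mathbb{P}(X\leq y)\leq C' y^a$ for all $y>0$. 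Inserting this back gives
\begin{equation*}
\mathbb{E}e^{-tX}\leq C't\int_0^\infty e^{-ty}y^a\,dy=C'\Gamma(a+1)\,t^{-a},
\end{equation*}
which is (ii).

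For (iii) $\Rightarrow$ (iv), I would use the layer-cake identity $\mathbb{E}X^{-b}=\int_0^\infty\mathbb{P}(X<u^{-1/b})\,du$ and split at $u=1$. The integral over $(0,1)$ is trivially at most $1$, while on $(1,\infty)$ the hypothesis (iii) bounds the integrand by $Cu^{-a/b}$, which is integrable at infinity precisely when $a/b>1$, i.e., when $b<a$. The main (mild) obstacle in the whole argument is the global-versus-asymptotic upgrade inside (iii) $\Rightarrow$ (ii); everything else follows from standard Chernoff or Fubini-type manipulations.
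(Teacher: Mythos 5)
Your proposal is mathematically correct. Note first, though, that the paper itself contains no proof of this lemma: it is stated as a citation to Liu (1999, Lemma 4.4), and no argument is given in the present text, so there is nothing internal to compare against. On its own merits, each of your four implications is sound: the bound $e^{-tX}\leq (a/e)^a t^{-a}X^{-a}$ gives (i)$\Rightarrow$(ii); the Chernoff step $\mathbb{P}(X\leq x)\leq e^{tx}\mathbb{E}e^{-tX}$ with $t=1/x$ gives (ii)$\Rightarrow$(iii); the identity $\mathbb{E}e^{-tX}=t\int_0^\infty e^{-ty}\mathbb{P}(X\leq y)\,dy$, together with the upgrade from a local bound $\mathbb{P}(X\leq y)\leq Cy^a$ near $0$ to a global one (using $\mathbb{P}(X\leq y)\leq 1\leq y^a/\delta^a$ for $y>\delta$), gives (iii)$\Rightarrow$(ii); and the layer-cake computation $\mathbb{E}X^{-b}=\int_0^\infty\mathbb{P}(X< u^{-1/b})\,du$ split at $u=1$ gives (iii)$\Rightarrow$(iv), with convergence at infinity precisely when $b<a$. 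This is the standard elementary route (essentially an Abelian/Tauberian exchange between tails, Laplace transforms and negative moments) and, to the extent it can be checked, it matches what Liu's cited lemma encapsulates. The only point worth making explicit when writing it up is the local-to-global upgrade of the tail bound, which you already flagged as the one nontrivial detail.
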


 \medskip
 \begin{proof}[Proof of  Theorem \ref{MCT2}]
 Let $N_{\delta}=\sum\limits_{k=1}^N\mathbf 1_{\{\min\limits_{i}\sum\limits_{j} ( \mathbf A_k)_{ij}>\delta\}}$ for $\delta>0$. Then $N_\delta \uparrow N$, as $\delta\downarrow0$. Since $\phi(\mathbf t)=\mathbb E e^{-\mathbf t\cdot \mathbf Z}\rightarrow 0$ as $\|\mathbf t\|\rightarrow\infty$, there exists $t_\varepsilon>0$ such that for $\|\mathbf t\|>t_\varepsilon$, $\phi(\mathbf t)<\varepsilon$.
 For $\|\mathbf t\|>t_\varepsilon/\delta$, if $\min\limits_{i}\sum\limits_{j} ( \mathbf A_k)_{ij}>\delta$, we have
 $\|\mathbf t \mathbf A_k\|\geq \|\mathbf t\|\min\limits_{i}\sum\limits_{j} ( \mathbf A_k)_{ij}>t_\varepsilon$. By equation (\ref{E1}),
 \begin{eqnarray*}
 \phi(\mathbf t)\leq \E\phi(\mathbf t\mathbf A_1)\left(\varepsilon^{N_\delta-1}\mathbf 1_{\{N_\delta\geq1\}}+\mathbf 1_{\{N_\delta=0\}}\right)=q_{\varepsilon,\delta}\mathbb E \phi(\mathbf t\tilde{\mathbf A}),
 \end{eqnarray*}
 where $q_{\varepsilon,\delta}=\E\left(\varepsilon^{N_\delta-1}\mathbf 1_{\{N_\delta\geq1\}}+\mathbf 1_{\{N_\delta=0\}}\right)$ and $\tilde{\mathbf A}=(\tilde a_{ij})$ is a random matrix whose distribution is determined by $\mathbb E g(\tilde{\mathbf A})=\frac{1}{q_{\varepsilon, \delta}}\E g({\mathbf A}_1)\left(\varepsilon^{N_\delta-1}\mathbf 1_{\{N_\delta\geq1\}}+\mathbf 1_{\{N_\delta=0\}}\right)$ for all bounded and measurable function $g$ on $\mathbb R_+^{p^2}$. We can see that by the dominated convergence theorem,
 $$q_{\varepsilon,\delta}\stackrel{\delta\downarrow0}{\longrightarrow}\E\varepsilon^{N-1}\mathbf 1_{\{N\geq1\}}\stackrel{\varepsilon\downarrow0}{\longrightarrow}\P(N=1)<1,$$
 and  since $\E\left(\min\limits_{i}\sum\limits_{j} a_{ij}\right)^{-\lambda}<\infty$,
\begin{eqnarray*}
 q_{\varepsilon,\delta}\E\left(\min\limits_{i}\sum\limits_{j}\tilde{ a}_{ij}\right)^{-\lambda}&=&\E\left(\min\limits_{i}\sum\limits_{j} a_{ij}\right)^{-\lambda}\left(\varepsilon^{N_\delta-1}\mathbf 1_{\{N_\delta\geq1\}}+\mathbf 1_{\{N_\delta=0\}}\right)\\
 &\stackrel{\delta\downarrow0}{\longrightarrow}&\E\left(\min\limits_{i}\sum\limits_{j} a_{ij}\right)^{-\lambda}\mathbf 1_{\{N\geq1\}}\stackrel{\varepsilon\downarrow0}{\longrightarrow}\E\left(\min\limits_{i}\sum\limits_{j} a_{ij}\right)^{-\lambda}\mathbf 1_{\{N=1\}}<1.
 \end{eqnarray*}
 By Lemma \ref{MCLN1}, $\phi(\mathbf t)=O(\|\mathbf t\|^{-\lambda})$ ($\|\mathbf t\|\rightarrow\infty$). Thus for given non-zero $\mathbf y=(y^1,\cdots,y^p)\in \mathbb R_{+}^p$, $\E e^{-t\mathbf y\cdot \mathbf Z}=O(t^{-\lambda})(t\rightarrow\infty)$, so that by Lemma \ref{MCLN2},
 $\P(\mathbf y\cdot \mathbf Z\leq x)=O(x^\lambda) (x\rightarrow0)$ and $\E(\mathbf y\cdot \mathbf Z)^{-\lambda_1},\infty$, $\forall 0<\lambda_1<\lambda$.

 For the second part, notice  that we have obtained $ \phi(\mathbf t)\leq C \|\mathbf t\|^\lambda$ for all $\|\mathbf t\|>0$ in the first part, where $C$ is a positive constant. By equation (\ref{E1}),
 $$\phi(\mathbf t)\leq \E\prod_{k=1}^{\underline{m}}\phi(\mathbf t\mathbf A_k)\leq C^{\underline{m}}\E\prod_{k=1}^{\underline{m}}\|\mathbf t\mathbf A_k\|^{-\lambda}\leq C^{\underline{m}}\|\mathbf t\|^{-\underline{m}\lambda} \mathbb E\left[\prod\limits_{k=1}^{\underline {m}}\left(\min\limits_i\sum\limits_{j=1}^p (\mathbf A_k)_{ij}\right)^{-\lambda}\right].$$
 The rest results follow by Lemma \ref{MCLN2}.

\end{proof}

\bigskip
 \begin{proof}[Proof of  Theorem \ref{MCT3}]We only prove the results for $\phi(\mathbf t)$. The assertions for $\P(\mathbf y\cdot \mathbf Z\leq x)$ follow from  that about $\E^{-t\mathbf y\cdot \mathbf Z}$ and the Tauberian Theorem of exponential type (cf  \cite{liu96}).

 We first prove (a). By equation (\ref{E}),
$$1=\sum_{j=1}^p V_j=\E\sum_{k=1}^N\sum_{j,l=1}^p (\mathbf A_k)_{jl}V_l>\E\sum_{k=1}^{\underline{m}}\sum_{j,l=1}^p (\mathbf A_k)_{jl}V_l\geq \underline{a}\;\underline{m}\;p.$$
The strict inequality holds because of (\ref{MCEC2}) and $\P(N=\underline{m})>0$. Therefore, we have $\gamma=-\log\underline{m}/\log{(\underline{a}p)}\in(0,1)$. Since for all $k$,
$$\phi(\mathbf t\mathbf A_k)=\E\exp\left\{-\sum_{i,j=1}^pt_i(\mathbf A_k)_{ij}Z_j\right\}\leq \E\exp\{-\underline{a}\|\mathbf t\|\mathbf e\cdot\mathbf Z\}=\phi(\underline{a}\|\mathbf t\|\mathbf e),$$
where $\mathbf e=(1,1,\cdots, 1)$,
by equation (\ref{E1}),
\begin{equation}\label{MCNT21}
\phi(\mathbf t)\leq \E\prod_{k=1}^{\underline{m}}\phi(\mathbf t \mathbf A_k)\leq \left[\phi(\underline{a}\|\mathbf t\|\mathbf e)\right]^{\underline{m}}.
\end{equation}
Applying (\ref{MCNT21}) with $\mathbf t=\underline{a}\|\mathbf t\|\mathbf e$, we have $\phi(\underline{a}\|\mathbf t\|\mathbf e)\leq \left[\phi(\underline{a}^2p\|\mathbf t\|\mathbf e)\right]^{\underline{m}}$, so that
$\phi(\mathbf t)\leq \left[\phi(\underline{a}^2p\|\mathbf t\|\mathbf e)\right]^{\underline{m}}$. By iteration, we get
\begin{equation}\label{MCNT22}
\phi(\mathbf t)\leq \left[\phi(\underline{a}^kp^{k-1}\|\mathbf t\|\mathbf e)\right]^{\underline{m}^k}.
\end{equation}
As $\underline{a}p<1$, for $\|\mathbf t\|\geq p$, there exists an integer $k\geq0$ such that  $p/(\underline{a}p)^k\leq\|\mathbf t\|< p/(\underline{a}p)^{k+1}$. So this $k$ satisfies
\begin{equation}\label{MCNT24}
\frac{\log p-\log \|\mathbf t\|}{\log {(\underline{a}p})}-1<k\leq \frac{\log p-\log \|\mathbf t\|}{\log {(\underline{a}p})}.
\end{equation}
For any $x\geq1$, one can see that
\begin{equation}\label{MCNT23}
\phi(x\mathbf e)=\E\exp\left\{-x\sum_{j=1}^pZ_j\right\}\leq \E\exp\{-\sum_{j=1}^pZ_j\}=\phi(\mathbf e)<1.
\end{equation}
Since $\underline{a}^kp^{k-1}\|\mathbf t\|\geq 1$, by (\ref{MCNT22}), (\ref{MCNT23}) and (\ref{MCNT24}) , we have
$$\log \phi(\mathbf t)\leq \underline{m}^k\log \phi(\mathbf e)\leq \exp\left\{\frac{\log\underline{m}}{\log(\underline{a}p)}(\log p-\log\|\mathbf t\|)\right\}=-C_1\|\mathbf t\|^\gamma,$$
where $C_1=- p^{-\gamma}\log \phi(\mathbf e)>0 $.

We then prove (b). The proof is similar to that of (a). If $\max_{ij}(\mathbf A_k)_{ij}\leq \underline{a}+\varepsilon$, then $$\phi(\mathbf t\mathbf A_k)\geq\phi\left((\underline{a}+\varepsilon)\|\mathbf t\|\mathbf e\right).$$
By equation (\ref{E1}),
\begin{equation}\label{MCNT25}
\phi(\mathbf t)\geq \E\prod_{k=1}^{\underline{m}}\phi(\mathbf t \mathbf A_k)\mathbf 1_{\{N=\underline{m},\; \max\limits_{ij}(\mathbf A_k)_{ij}\leq \underline{a}+\varepsilon,\forall k\}}
\geq \rho\left[\phi((\underline{a}+\varepsilon)\|\mathbf t\|\mathbf e)\right]^{\underline{m}},
\end{equation}
where $\rho=\P(N=\underline{m},\; \max\limits_{ij}(\mathbf A_k)_{ij}\leq \underline{a}+\varepsilon,\forall k)<1$. By iteration, we get
\begin{equation}\label{MCNT26}
\phi(\mathbf t)\geq \rho^{\sum_{j=0}^{k-1}\underline{m}^j}\left[\phi((\underline{a}+\varepsilon)^kp^{k-1}\|\mathbf t\|\mathbf e)\right]^{\underline{m}^k}.
\end{equation}
As $(\underline{a}+\varepsilon)p<1$, for $\|\mathbf t\|\geq p$, there exists an integer $k\geq1$ such that  $p/((\underline{a}+\varepsilon)p)^{k-1}\leq\|\mathbf t\|< p/((\underline{a}+\varepsilon)p)^{k}$. Since $\phi(x\mathbf e)\geq \phi(\mathbf e)$ for any $x<1$ and $(\underline{a}+\varepsilon)^kp^{k-1}\|\mathbf t\|< 1$, we have
$$\phi((\underline{a}+\varepsilon)^kp^{k-1}\|\mathbf t\|\mathbf e)\geq \phi(\mathbf e).$$
Therefore, (\ref{MCNT26}) yields
\begin{eqnarray*}
\log \phi(\mathbf t)&\geq &\underline{m}^k\left(\log \phi(\mathbf e)+ \underline{m}^{-k} \sum\limits_{j=0}^{k-1}\underline{m}^j\log \rho \right)\\
&\geq &\underline{m}^k\left(\log \phi(\mathbf e)+ \frac{\log \rho}{\underline{m}-1} \right)\\
&\geq& \exp\left\{\frac{\log\underline{m}}{\log((\underline{a}+\varepsilon)p)}(\log p-\log\|\mathbf t\|)+\log \underline{m}\right\}\left(\log \phi(\mathbf e)+ \frac{\log \rho}{\underline{m}-1} \right)\\
&=&-C_2\|\mathbf t\|^{\gamma(\varepsilon)},
\end{eqnarray*}
where $C_2=- p^{-\gamma(\varepsilon)}\underline{m}\left(\log \phi(\mathbf e)+ \frac{\log \rho}{\underline{m}-1} \right)>0$.
\end{proof}


\bigskip

\section {Moments for the complex case}
In this section, we consider  the complex case, where in equation (\ref{E}),
all the matrix $\mathbf A_k$ and the vectors $\mathbf Z, \mathbf Z(k)$ are complex (with $\mathbb C$ in place of $\mathbb R_+$). Here we still interested in the existence of the $\alpha$th-moment ($\alpha>1$) solution, or in other words, the $L^\alpha$ convergence and the $\alpha$th-moment of the Mandelbrot's martingale $\{\mathbf Y_n\}$ defined by (\ref{beqYn}).

Besides Assumption (H), we assume moreover that
$$\hat{\textbf{M}}:=\E\sum\limits_{i=1}^{N}\hat{\textbf{A}}_k\quad\text{with}\;(\hat{\textbf{A}}_k)_{ij}:=|(\textbf{A}_k)_{ij}|$$
is finite and strictly positive. For $t\in\mathbb{R}$ fixed,   let
$$\hat{\textbf{M}}(t):=\E\sum_{i=1}^N\hat{\textbf{A}}_k^{(t)}\quad\text{with}\; (\hat{\textbf{A}}_k^{(t)})_{ij}:=(\hat{\textbf{A}}_k)^t_{ij},$$
whose  maximum-modulus eigenvalue is denoted by $\hat\rho(t)$ and the
corresponding normalized left and right  positive eigenvectors by $\hat{\textbf{U}}(t),\hat{\textbf{V}}(t)$. Define
$$\hat{\textbf{Y}}_n^{(t)}:=\frac{\sum\limits_{u\in\T_n}\hat{\textbf{X}}_u^{(t)}\hat{\textbf{V}}(t)}{\hat\rho(t)^n}\quad\text{with}\;
\hat{\textbf{X}}_u^{(t)}:=\hat{\textbf{A}}_{u_1}^{(t)}\cdots\hat{\textbf{A}}_{u_1\cdots u_n}^{(t)}\;
\text{for}\;u\in\T_n.$$
Obviously, $\{\hat{\textbf{Y}}_n^{(t)}\}$ has the same structure as the martingale $\{\textbf{Y}_n^{(t)}\}$ of the real case for which  we have established inequalities  in Section 3,  therefore, we
can apply these results (Lemmas \ref{MCL1.2.1} and \ref{MCL1.2.2}) to the martingale $\{\hat{\textbf{Y}}_n^{(t)}\}$.

\bigskip

Following similar arguments to the proof of Theorem \ref{MCT1}, we  reach  the following result for the
the complex case.

\begin{thm}[Complex case]\label{MCT21}Assume that all the matrix $\mathbf A_k$ and the vectors $\mathbf Z, \mathbf Z(k)$ are complex.
Let $\alpha>1$. If $\E\|\sum\limits_{i=1}^N\hat{\textbf{A}_i} \|^\alpha<\infty$ and either of the following assertions holds:
\begin{itemize}
\item[(i)] $\alpha \in(1,2]$ and $p^{(\alpha-1)}\hat{\rho}(\alpha)<1$;
\item[(ii)] $\alpha>2$ and $\max\{p^{\alpha-1}\hat{\rho}(\alpha), p^{\alpha/\beta}\hat{\rho}(\beta)\}<1$ for some $\beta\in(1,2]$,
\end{itemize}
then
$\sup\limits_n\E\|\mathbf Y_n\|^\alpha<\infty$, and $\{\mathbf Y_n\}$ converges a.s. and in $L^\alpha$ to a random vector $\mathbf Y$, so that
$\E\mathbf Y=\mathbf V$ and $0<\E\|\mathbf Y\|^\alpha<\infty$.
\end{thm}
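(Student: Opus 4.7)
The plan is to mirror the argument of Theorem \ref{MCT1}(a), Step 1, with the nonnegative matrices $\hat{\mathbf{X}}_u$ playing the role of $\mathbf{X}_u$ throughout. The key entrywise domination
$$|(\mathbf{X}_u)_{ij}| \leq (\hat{\mathbf{X}}_u)_{ij},$$
obtained by expanding the matrix product and applying the triangle inequality, implies $|Y_{n,i}| \leq \hat{Y}_{n,i}$ and, more importantly, allows every moment inequality in Lemmas \ref{MCL1.2.1} and \ref{MCL1.2.2} to be transferred from $(\mathbf{X}_u^{(t)})_{ij}$ to $|(\mathbf{X}_u)_{ij}|^t$ with $\hat{\rho}(t)$ in place of $\rho(t)$ and $\hat{\mathbf{V}}(t)$ in place of $\mathbf{V}(t)$. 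Under $\E\|\sum_k \hat{\mathbf{A}}_k\|^\alpha<\infty$ together with Assumption (H) applied to $\hat{\mathbf{M}}$, log-convexity of $(\hat{\mathbf{M}}(t))_{ij}$ gives existence of $\hat{\rho}(t)$ on $[1,\alpha]$ and $\max_i \E \hat{Y}_{1,i}^\alpha < \infty$.

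Next I would write the complex-valued martingale increment as
$$Y_{n+1,i} - Y_{n,i} = \sum_{j=1}^p \sum_{u\in\T_n}(\mathbf{X}_u)_{ij}\bigl[Y_{1,j}(u) - V_j\bigr],$$
which, conditionally on $\F_n$, is a sum of independent centered $\mathbb{C}$-valued random variables. Burkholder's inequality (valid for complex martingales with the same constants up to a universal factor) combined with the entrywise domination gives inequalities of exactly the form of Lemma \ref{MCL1.2.1}(a)--(b) with $\rho$ replaced by $\hat{\rho}$. In case (i) this yields $\E|Y_{n+1,i}-Y_{n,i}|^\alpha \leq C p^{(\alpha-1)n}\hat{\rho}(\alpha)^n$, and a second application of Burkholder to the complex martingale $\{Y_{n,i}\}$ bounds $\sup_n \E|Y_{n,i}|^\alpha$ by a convergent geometric series precisely when $p^{\alpha-1}\hat{\rho}(\alpha)<1$. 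In case (ii) I would use the $\beta$-variant of Lemma \ref{MCL1.2.1}(b) (per Remark 3.1) to obtain
$$\E|Y_{n+1,i}-Y_{n,i}|^\alpha \leq C p^{(\beta-1)\alpha n/\beta}\,\hat{\rho}(\beta)^{\alpha n/\beta}\,\E[\hat{Y}_{n,i}^{(\beta)}]^{\alpha/\beta},$$
and control the last factor via Lemma \ref{MCL1.2.2} applied to the nonnegative martingale $\{\hat{\mathbf{Y}}_n^{(\beta)}\}$, whose required initial $(\alpha/\beta)$-th moment is finite by H\"older's inequality. A Burkholder--Minkowski argument for $\{Y_{n,i}\}$ then reduces the problem to convergence of $\sum_n (\E|Y_{n+1,i}-Y_{n,i}|^\alpha)^{2/\alpha}$, which is ensured exactly by the two-part maximum condition in the hypothesis.

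Once $\sup_n \E|Y_{n,i}|^\alpha < \infty$ for each $i$, the martingale $\{\mathbf{Y}_n\}$ is bounded in $L^\alpha$, hence converges a.s.\ and in $L^\alpha$ to $\mathbf{Y}$, with $\E\mathbf{Y}=\mathbf{V}$ and $0<\E\|\mathbf{Y}\|^\alpha<\infty$ following from $L^\alpha$ convergence and the nonvanishing of $\mathbf{V}$. The main obstacle is the bookkeeping in case (ii): matching Burkholder's exponents and the polynomial-times-geometric bound of Lemma \ref{MCL1.2.2} to the stated hypothesis requires carefully tracking which $\hat{\rho}(\cdot)$ appears in each factor. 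Conceptually, however, the entrywise bound $|(\mathbf{X}_u)_{ij}|\leq(\hat{\mathbf{X}}_u)_{ij}$ reduces the complex problem to a direct re-run of the real-valued argument of Section 4 with $\hat{\rho}$ in place of $\rho$, so that the single-generation condition suffices and there is no need for the change-of-generations device of Step 2 in the proof of Theorem \ref{MCT1}(a).
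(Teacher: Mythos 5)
Your proposal follows essentially the same route as the paper: the entrywise domination $|(\mathbf{X}_u)_{ij}|\leq(\hat{\mathbf{X}}_u)_{ij}$ transfers the Burkholder estimates of Lemmas \ref{MCL1.2.1}--\ref{MCL1.2.2} to the complex martingale with $\hat\rho$ in place of $\rho$, and a single-generation condition then suffices exactly as you say. The only substantive step you leave as ``bookkeeping'' in case (ii) is supplied in the paper by a short log-convexity argument (Lemma \ref{MCL2.3}): writing each competing term as $\left(p^{1-1/x}\hat\rho(x)^{1/x}\right)^{\alpha}$ and using Kingman's log-convexity of $\hat\rho$ to show this function of $x$ attains its maximum on $[\beta,\alpha]$ at an endpoint, which collapses the cascade of $\hat\rho(\beta^l)$ factors produced by Lemma \ref{MCL1.2.2} to the two-endpoint hypothesis $\max\{p^{\alpha-1}\hat{\rho}(\alpha),\,p^{\alpha/\beta}\hat{\rho}(\beta)\}<1$; the paper also closes with a direct Minkowski/Cauchy-in-$L^\alpha$ estimate on $\sum_n(\E|Y_{n+1,i}-Y_{n,i}|^\alpha)^{1/\alpha}$ rather than a second Burkholder pass, though both endgames are valid.
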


In particular, for the case $p=1$, it is easy to see that $V=1$ and $\hat{\rho}(t)=\hat{m}(t)=\E\sum\limits_{i=1}^N|A_i|^t$.

\begin{co}[case p=1]Let $p=1$ and $\alpha>1$. If $\E \left(\sum\limits_{i=1}^N|A_i|\right)^\alpha<\infty$ and either of the following assertions holds:\\
\begin{itemize}
\item[(i)] $\alpha \in(1,2]$ and $ \hat{\rho}(\alpha)<1$;
\item[(ii)]$\alpha>2$ and $\max\{ \hat{\rho}(\alpha), \hat{\rho}(\beta)\}<1$ for some $\beta\in(1,2]$,
\end{itemize}
then
$\sup_n\E|Y_n|^\alpha<\infty$ and $\{Y_n\}$ converges a.s. and in $L^\alpha$ to a random variable $Y$, so that
$\E Y=1$ and $0<\E Y^\alpha<\infty$.
\end{co}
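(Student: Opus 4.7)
The plan is to derive this corollary as a direct specialization of Theorem~\ref{MCT21} to the dimension $p=1$, so the main task is to verify that each quantity appearing in Theorem~\ref{MCT21} collapses to the form stated in the corollary, and that the two sets of hypotheses then match up exactly.

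First I would record the reductions that occur when $p=1$. The random matrices $\mathbf{A}_k$ become scalars $A_k \in \mathbb{C}$, so $\hat{\mathbf{A}}_k$ is just $|A_k|$. The normalization in Assumption (H) forces $V_1=1$, so $\mathbf{V}=1$, and the matrix $\hat{\mathbf M}(t)=\E\sum_{i=1}^N |A_i|^t$ is a $1\times 1$ matrix whose Perron eigenvalue coincides with its single entry; in particular $\hat\rho(t)=\hat m(t):=\E\sum_{i=1}^N|A_i|^t$. The martingale $\mathbf Y_n$ defined by (\ref{beqYn}) becomes the scalar $Y_n=\sum_{u\in\T_n} X_u$ with $X_u=A_{u_1}A_{u_1u_2}\cdots A_{u_1\cdots u_n}$, a complex random variable.

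Next I would verify that the hypotheses of the corollary are precisely those of Theorem~\ref{MCT21} under this specialization. The integrability assumption $\E\bigl(\sum_{i=1}^N|A_i|\bigr)^\alpha<\infty$ is literally $\E\|\sum_{i=1}^N\hat{\mathbf{A}}_i\|^\alpha<\infty$ when $p=1$, regardless of which matrix norm is chosen. In case (i), the condition $p^{\alpha-1}\hat\rho(\alpha)<1$ from Theorem~\ref{MCT21} reduces to $\hat\rho(\alpha)<1$ since $p=1$; likewise in case (ii), $\max\{p^{\alpha-1}\hat\rho(\alpha),\, p^{\alpha/\beta}\hat\rho(\beta)\}<1$ reduces to $\max\{\hat\rho(\alpha),\hat\rho(\beta)\}<1$.

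Finally I would apply Theorem~\ref{MCT21} to conclude $\sup_n\E|Y_n|^\alpha<\infty$, a.s. and $L^\alpha$ convergence $Y_n\to Y$, and $\E Y=V=1$ with $0<\E|Y|^\alpha<\infty$. There is no genuine obstacle here: once the reductions above are written out, the corollary is immediate. The only mildly delicate point is to note that the strict positivity hypothesis in Theorem~\ref{MCT21} is automatic when $p=1$ (a positive scalar is trivially strictly positive), so no extra assumption is needed beyond $\E\sum_{i=1}^N|A_i|\in(0,\infty)$ which is implicit in the non-degeneracy needed for $\hat\rho(t)$ to be defined.
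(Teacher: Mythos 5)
Your proposal is correct and follows exactly the route the paper intends: the corollary is stated as an immediate specialization of Theorem~\ref{MCT21} to $p=1$, with the only remarks needed being that the normalization in Assumption (H) forces $V=1$ and that $\hat\rho(t)$ collapses to the scalar $\hat m(t)=\E\sum_{i=1}^N|A_i|^t$, both of which you verify. Nothing further is required.
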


\bigskip
\bigskip
The proof of Theorem \ref{MCT21} is similar to that  of Theorem \ref{MCT1}. We first show several lemmas for the martingale $\{\mathbf Y_n^{(t)}\}$.

\begin{lem}\label{MCL2.1}Let $\alpha>1$. Fix $t\in\mathbb R$. Assume that $\max\limits_i\E|Y_{1,i}^{(t)}|^\alpha<\infty$. Then for each $i=1,2,\cdots, p$,
\begin{itemize}
\item[(a)]if $\alpha\in(1,2]$,
\begin{equation}\label{MCE2.2.1}
\E\left|Y_{n+1,i}^{(t)}-Y_{n,i}^{(t)}\right|^\alpha\leq C p^{(\alpha-1)n}\left[\frac{\hat{\rho}(\alpha t)}{|\rho(t)|^\alpha}\right]^n;
\end{equation}
\item[(b)]if $\alpha>2$,   for any $\beta\in(1,2]$,
\begin{equation}\label{MCE2.2.2}
\E\left|Y_{n+1,i}^{(t)}-Y_{n,i}^{(t)}\right|^\alpha\leq C p^{\alpha n/2}\left[\frac{\hat{\rho}(\beta t)^{\alpha/\beta}}{|\rho(t)|^\alpha}\right]^n
\E\left[\hat{Y}_{n,i}^{(\beta t)}\right]^{\alpha/\beta },
\end{equation}
\end{itemize}
where $C$ is a constant depending on $\alpha,p,t$.

\end{lem}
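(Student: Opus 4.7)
The plan is to adapt the proof of Lemma \ref{MCL1.2.1} (together with the $\beta$-extension noted in Remark 3.1) to the complex case by passing from complex entries to their moduli at the right moment, which reduces the problem to non-negative matrix estimates already established for $\hat{\mathbf A}_k$ in Section 3.

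First I would write
\begin{equation*}
Y_{n+1,i}^{(t)}-Y_{n,i}^{(t)}=\frac{1}{\rho(t)^{n}}\sum_{j=1}^{p}\sum_{u\in\T_n}(\mathbf{X}_u^{(t)})_{ij}\bigl[Y_{1,j}^{(t)}(u)-V_j(t)\bigr],
\end{equation*}
exactly as in the real case, and observe that conditionally on $\mathcal{F}_n$ the inner sum over $u\in\T_n$ is a sum of independent centered complex random variables. Burkholder's inequality for complex martingales (obtained by splitting real and imaginary parts and applying the classical inequality componentwise) then yields the same type of bound as in the real case but with $|(\mathbf X_u^{(t)})_{ij}|$ and $|Y_{1,j}^{(t)}(u)-V_j(t)|$ in place of the corresponding real values. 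For $\alpha\in(1,2]$ one arrives at $(\sum_u|(\mathbf X_u^{(t)})_{ij}|^2|Y_{1,j}^{(t)}(u)-V_j(t)|^2)^{\alpha/2}$; for $\alpha>2$, invoking Remark 3.1, I would instead use Burkholder with exponent $\beta\in(1,2]$ in place of $2$ and obtain $(\sum_u|(\mathbf X_u^{(t)})_{ij}|^\beta|Y_{1,j}^{(t)}(u)-V_j(t)|^\beta)^{\alpha/\beta}$.

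The critical complex-to-real step is the triangle inequality $|(\mathbf X_u^{(t)})_{ij}|\le (\hat{\mathbf X}_u^{(t)})_{ij}$, which holds because the $(i,j)$ entry of the product $\mathbf A_{u_1}^{(t)}\cdots \mathbf A_{u_1\cdots u_n}^{(t)}$ is a sum of products of complex numbers whose moduli are bounded by the corresponding entries of $\hat{\mathbf A}_{u_1}^{(t)}\cdots\hat{\mathbf A}_{u_1\cdots u_n}^{(t)}$. Once the passage to moduli has been made, I would apply Minkowski's integral inequality conditional on $\mathcal{F}_n$ — using independence of $\{\mathbf X_u^{(t)}\}$ and $\{Y_{1,j}^{(t)}(u)-V_j(t)\}$ — to separate the two factors, and then invoke the non-negative analogues of (\ref{MCE1.2.5}) and (\ref{MCE1.2.6}) applied to $\hat{\mathbf A}_k$, namely $[(\hat{\mathbf X}_u^{(t)})_{ij}]^{\alpha}\le p^{(\alpha-1)(n-1)}(\hat{\mathbf X}_u^{(\alpha t)})_{ij}$ and $\sum_u (\hat{\mathbf X}_u^{(\alpha t)})_{ij}\le \hat\rho(\alpha t)^{n}\hat V_i(\alpha t)/\hat V_j(\alpha t)$. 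This delivers (\ref{MCE2.2.1}) in part (a); in part (b) the same scheme with exponent $\beta$ instead of $2$, together with the bound $\sum_u(\hat{\mathbf X}_u^{(\beta t)})_{ij}\le \hat\rho(\beta t)^n \hat Y_{n,i}^{(\beta t)}/\hat V_j(\beta t)$, produces (\ref{MCE2.2.2}).

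The main obstacle is the correct handling of the complex Burkholder inequality and the bookkeeping of the normalizing factor $|\rho(t)|^{\alpha n}$ in the denominator (as opposed to $\rho(t)^{\alpha n}$ in the real case); once these are in place, the proof is essentially mechanical, since every combinatorial and Perron--Frobenius estimate is applied to the non-negative matrix $\hat{\mathbf A}_k$, which behaves exactly like $\mathbf A_k$ in Section 3.
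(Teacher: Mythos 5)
Your proof takes essentially the same route as the paper's own argument: apply Burkholder's inequality to the martingale difference, pass to moduli via $|(\mathbf X_u^{(t)})_{ij}|\le(\hat{\mathbf X}_u^{(t)})_{ij}$, and then run the non-negative estimates of Section 3 on $\hat{\mathbf A}_k$ (with $\beta$ in place of $2$ for $\alpha>2$, per Remark 3.1). The paper's proof is in fact just this one-line reduction followed by "repeat the proof of Lemma \ref{MCL1.2.1}," so your reconstruction is correct and faithful.
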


\begin{proof}
Notice that $|(\textbf{X}_u^{(t)})_{ij}|\leq(\hat{\textbf{X}}_u^{(t)})_{ij}$. Applying Burkholder's inequality, we get
\begin{eqnarray*}
\E\left|Y_{n+1,i}^{(t)}-Y_{n,i}^{(t)}\right|^\alpha&\leq& \frac{C}{|\rho(t)|^{\alpha n}}\sum_{j=1}^p\E\left(
\sum_{u\in\T_n}\left|(\textbf{X}_u^{(t)})_{ij}\right|^2\left|Y_{1,j}^{(t)}(u)-V_j(t)\right|^2\right)^{\alpha/2}\\
&\leq&\frac{C}{|\rho(t)|^{\alpha n}}\sum_{j=1}^p\E\left(
\sum_{u\in\T_n}\left[(\hat{\textbf{X}}_u^{(t)})_{ij}\right]^2\left|Y_{1,j}^{(t)}(u)-V_j(t)\right|^2\right)^{\alpha/2}.
\end{eqnarray*}
Then repeat the proof of Lemma \ref{MCL1.2.1} (with $\beta$ in place of $2$ for the case where $\alpha>2$).
\end{proof}

Apply Lemma \ref{MCL1.2.2} (with $\beta$ in place of $2$) to (\ref{MCE2.2.2}), we immediately get the following lemma.

\begin{lem}\label{MCL2.2}
Let $\alpha>1$. Fix $t\in\mathbb R$. If $\max\limits_i\E\left|Y_{1,i}^{(t)}\right|^\alpha<\infty$ and $\max\limits_i\E\left|\hat{Y}_{1,i}^{(\beta t)}\right|^{\alpha/\beta}<\infty$ for some $\beta\in(1,2]$,
then for $\alpha\in(\beta^m,\beta^{m+1}]$ ($m\geq 1$ is an integer),
$$\E\left|Y_{n+1,i}^{(t)}-Y_{n,i}^{(t)}\right|^\alpha\leq C n^{\alpha/\beta}\left[\max\{p^{\alpha-1}\frac{\hat{\rho}(\alpha t)}{|\rho(t)|^\alpha},\;
p^{\frac{\beta^l-1}{\beta^l}\alpha}\frac{\hat{\rho}(\beta^lt)^{\alpha/\beta^l}}{|\rho(t)|^\alpha},l=1,\cdots,m
\}\right]^n.$$
\end{lem}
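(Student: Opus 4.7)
The plan is simply to chain together the inequality of Lemma \ref{MCL2.1}(b) with Lemma \ref{MCL1.2.2} applied in its $\beta$-generalized form (Remark 3.1), since $\{\hat{\mathbf Y}_n^{(\beta t)}\}$ has exactly the non-negative real-valued structure that Lemma \ref{MCL1.2.2} treats.

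First, I would invoke (the $\beta$-generalized form of) Lemma \ref{MCL2.1}(b) to obtain
\begin{equation*}
\E\left|Y_{n+1,i}^{(t)}-Y_{n,i}^{(t)}\right|^\alpha
\leq C\,p^{\frac{(\beta-1)\alpha n}{\beta}}\left[\frac{\hat\rho(\beta t)^{\alpha/\beta}}{|\rho(t)|^\alpha}\right]^n
\E\left[\hat Y_{n,i}^{(\beta t)}\right]^{\alpha/\beta}.
\end{equation*}
Then, because $\alpha\in(\beta^m,\beta^{m+1}]$ implies $\alpha/\beta\in(\beta^{m-1},\beta^m]$, and the hypothesis $\max_i\E|\hat Y_{1,i}^{(\beta t)}|^{\alpha/\beta}<\infty$ supplies the initial moment input for the non-negative martingale $\{\hat Y_{n,i}^{(\beta t)}\}$, Lemma \ref{MCL1.2.2} (with $\beta$ in place of $2$, as justified in Remark 3.1) at level $m-1$ gives
\begin{equation*}
\E\left[\hat Y_{n,i}^{(\beta t)}\right]^{\alpha/\beta}
\leq C\,n^{1+\frac{\beta^{m-1}-1}{\beta^{m-1}}\cdot\frac{\alpha}{\beta}}
\left[\max\Bigl\{1,\ p^{\frac{\alpha}{\beta}-1}\tfrac{\hat\rho(\alpha t)}{\hat\rho(\beta t)^{\alpha/\beta}},\ p^{\frac{\beta^l-1}{\beta^l}\cdot\frac{\alpha}{\beta}}\tfrac{\hat\rho(\beta^{l+1}t)^{\alpha/\beta^{l+1}}}{\hat\rho(\beta t)^{\alpha/\beta}},\ l=1,\ldots,m-1\Bigr\}\right]^n.
\end{equation*}

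The third step is to multiply these two bounds and distribute the prefactor $p^{(\beta-1)\alpha/\beta}\hat\rho(\beta t)^{\alpha/\beta}/|\rho(t)|^\alpha$ through the inner maximum. The ``$1$'' contributes the $l=1$ term $p^{\frac{\beta-1}{\beta}\alpha}\hat\rho(\beta t)^{\alpha/\beta}/|\rho(t)|^\alpha$; the term with $\hat\rho(\alpha t)$ combines (exponents of $p$: $\alpha/\beta-1+(\beta-1)\alpha/\beta=\alpha-1$) into $p^{\alpha-1}\hat\rho(\alpha t)/|\rho(t)|^\alpha$; and for each generic $l\in\{1,\ldots,m-1\}$, the telescoping identity
\begin{equation*}
\frac{\beta^l-1}{\beta^{l+1}}+\frac{\beta-1}{\beta}=\frac{\beta^{l+1}-1}{\beta^{l+1}}
\end{equation*}
re-indexes the entry to $p^{\frac{\beta^{l+1}-1}{\beta^{l+1}}\alpha}\hat\rho(\beta^{l+1}t)^{\alpha/\beta^{l+1}}/|\rho(t)|^\alpha$, i.e.\ the $(l{+}1)$-th term of the target maximum. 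Together these yield precisely the maximum in the statement. Finally, the polynomial exponent reduces via
\begin{equation*}
1+\frac{\beta^{m-1}-1}{\beta^{m-1}}\cdot\frac{\alpha}{\beta}=1+\frac{\alpha}{\beta}-\frac{\alpha}{\beta^m}\leq\frac{\alpha}{\beta},
\end{equation*}
where the inequality uses $\alpha\geq\beta^m$, so $n^{\cdots}\leq n^{\alpha/\beta}$.

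The only real obstacle is bookkeeping: verifying that the exponent-of-$p$ combinations and the shift $l\mapsto l+1$ in the inner maximum yield exactly the form printed in the statement, along with the fact that dropping the ``$\max$ with $1$'' produces no spurious contribution (absorbed into the $l=1$ term above). Everything else is a direct consequence of the two previously established lemmas.
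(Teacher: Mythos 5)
Your proposal follows exactly the route the paper indicates: combine Lemma \ref{MCL2.1}(b) with the $\beta$-generalized Lemma \ref{MCL1.2.2} (via Remark 3.1) applied at level $m-1$ to the non-negative martingale $\{\hat Y_{n,i}^{(\beta t)}\}$. The paper compresses this into a single sentence, so your version is a worthwhile unpacking, and the exponent bookkeeping you carry out is correct: $(\beta-1)\alpha/\beta + (\alpha/\beta - 1) = \alpha - 1$, the telescoping identity $\tfrac{\beta^l-1}{\beta^{l+1}} + \tfrac{\beta-1}{\beta} = \tfrac{\beta^{l+1}-1}{\beta^{l+1}}$ re-indexes the inner maximum properly, and $1 + \tfrac{\alpha}{\beta} - \tfrac{\alpha}{\beta^m} \le \tfrac{\alpha}{\beta}$ holds since $\alpha > \beta^m$. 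One point worth flagging: you quietly replace the $p^{\alpha n/2}$ printed in \eqref{MCE2.2.2} with the sharper $p^{(\beta-1)\alpha n/\beta}$ (which is what a literal repetition of the proof of Lemma \ref{MCL1.2.1}(b) with $\beta$ in place of $2$ actually produces, via $[(\hat{\mathbf X}_u^{(t)})_{ij}]^\beta \le p^{(\beta-1)(n-1)}(\hat{\mathbf X}_u^{(\beta t)})_{ij}$). This replacement is not optional: with the weaker $p^{\alpha n/2}$ as literally stated in Lemma \ref{MCL2.1}(b), chaining would yield $p$-exponents $\alpha/2 + \alpha/\beta - 1 > \alpha - 1$ for $\beta<2$, not the ones appearing in the statement of this lemma. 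So you have, correctly and implicitly, used the tight form of the prefactor; this resolves what would otherwise look like a mismatch in the paper's own two-line derivation.
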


\bigskip

Combing Lemmas \ref{MCL2.1} and \ref{MCL2.2}  leads to Lemma \ref{MCL2.3} below.

\begin{lem}\label{MCL2.3}
Let $\alpha>1$. Assume that $\E\|\sum\limits_{k=1}^N\hat{\textbf{A}_k}\|^\alpha<\infty$. Then for each $i=1,2,\cdots, p$,
\begin{itemize}
\item[(a)]if $\alpha\in(1,2]$,
\begin{equation}\label{MCE2.2.3}
\E\left|Y_{n+1,i} -Y_{n,i} \right|^\alpha\leq C \left[ p^{(\alpha-1)}\hat{\rho}(\alpha ) \right]^n;
\end{equation}
\item[(b)]if $\alpha>2$,  for any $\beta\in(1,2]$,
\begin{equation}\label{MCE2.2.4}
\E\left|Y_{n+1,i} -Y_{n,i} \right|^\alpha\leq C n^{\alpha/\beta}\max\{p^{\alpha-1}\hat{\rho}(\alpha), p^{\alpha/\beta}\hat{\rho}(\beta)^{\alpha/\beta}\}^n,
\end{equation}
\end{itemize}
where $C$ is a constant depending on $\alpha,p,t$.
\end{lem}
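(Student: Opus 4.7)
The natural approach is to apply Lemmas \ref{MCL2.1} and \ref{MCL2.2} at the distinguished value $t=1$, for which Assumption (H) gives $\rho(1)=1$ and $\mathbf Y_n^{(1)}=\mathbf Y_n$, so that $|\rho(1)|^\alpha=1$ causes the denominators in those lemmas to disappear.

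First I would verify the moment hypotheses. The assumption $\E\|\sum_{k=1}^N\hat{\mathbf A}_k\|^\alpha<\infty$ implies $\max_i\E|Y_{1,i}|^\alpha<\infty$ via the elementary domination
\[
|Y_{1,i}|=\Bigl|\sum_{k=1}^N(\mathbf A_k\mathbf V)_i\Bigr|\leq \sum_{k=1}^N(\hat{\mathbf A}_k\mathbf V)_i\leq C\Bigl\|\sum_{k=1}^N\hat{\mathbf A}_k\Bigr\|.
\]
For part (b) one additionally needs $\max_i\E|\hat Y_{1,i}^{(\beta)}|^{\alpha/\beta}<\infty$, which follows from the same hypothesis: from $(\hat{\mathbf A}_k)_{ij}^\beta\leq(\sum_{k'}(\hat{\mathbf A}_{k'})_{ij})^\beta$ one deduces $\E\|\sum_k\hat{\mathbf A}_k^{(\beta)}\|^{\alpha/\beta}\leq C\,\E\|\sum_k\hat{\mathbf A}_k\|^\alpha<\infty$.

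Part (a) is then an immediate corollary of Lemma \ref{MCL2.1}(a) specialised to $t=1$. For part (b), I apply Lemma \ref{MCL2.2} at $t=1$ with $m$ chosen so that $\alpha\in(\beta^m,\beta^{m+1}]$; this delivers the polynomial-times-geometric bound with rate
\[
\max\bigl\{p^{\alpha-1}\hat\rho(\alpha),\; p^{(\beta^l-1)\alpha/\beta^l}\hat\rho(\beta^l)^{\alpha/\beta^l}:\;1\le l\le m\bigr\}.
\]
The remaining task is to absorb the intermediate indices $l=1,\dots,m$ into the two-term maximum in (\ref{MCE2.2.4}). For this I use the log-convexity of $t\mapsto\hat\rho(t)$, which follows from Kingman's log-convexity of each entry of $\hat{\mathbf M}(t)$ in $t$. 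Writing $\gamma=\beta^l\in[\beta,\alpha]$ and $\theta=(\gamma-\beta)/(\alpha-\beta)\in[0,1]$, a short computation verifies that the two exponents on the right-hand side of
\[
\hat\rho(\gamma)^{\alpha/\gamma}\leq \hat\rho(\beta)^{\alpha(1-\theta)/\gamma}\,\hat\rho(\alpha)^{\alpha\theta/\gamma}
\]
sum to $1$ after rescaling by $\gamma$ and using either $\hat\rho(\beta)\leq\hat\rho(\alpha)^{\beta/\alpha}$ or $\hat\rho(\alpha)\leq\hat\rho(\beta)^{\alpha/\beta}$, so that in both cases $\hat\rho(\gamma)^{\alpha/\gamma}\leq\max\{\hat\rho(\alpha),\hat\rho(\beta)^{\alpha/\beta}\}$. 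The $p$-factor is controlled by the monotonicity estimate $p^{(\gamma-1)\alpha/\gamma}\leq p^{\alpha-1}$, valid for $\gamma\leq\alpha$ and $p\geq1$. Combining these two dominations absorbs each intermediate term into the claimed two-term maximum.

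The principal difficulty is precisely the log-convexity bookkeeping in the last step of (b); morally, it repeats the argument carried out in Step~1 of the proof of Theorem \ref{MCT1} around inequality (\ref{MCE1.2.12}), which reduced a max over all scales $2^l$ to the two endpoint rates. Aside from this, every other step is either a direct invocation of Lemma \ref{MCL2.1} or \ref{MCL2.2} or an elementary moment bound.
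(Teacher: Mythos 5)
Your plan is aligned with the paper's: verify the moment hypotheses, specialise Lemmas \ref{MCL2.1}--\ref{MCL2.2} at $t=1$ so that $\rho(1)=1$, and then compress the index set $l=1,\dots,m$ to the two endpoint scales via log-convexity of $\hat\rho$. Parts (a) and the moment verifications are fine. The gap is in the last absorption step of (b): you bound the $p$-factor and the $\hat\rho$-factor \emph{separately}, obtaining
\[
p^{(\gamma-1)\alpha/\gamma}\,\hat\rho(\gamma)^{\alpha/\gamma}\ \le\ p^{\alpha-1}\cdot\max\{\hat\rho(\alpha),\hat\rho(\beta)^{\alpha/\beta}\}.
\]
This is \emph{not} dominated by the two-term maximum in (\ref{MCE2.2.4}): whenever $\hat\rho(\beta)^{\alpha/\beta}>\hat\rho(\alpha)$ you are left with $p^{\alpha-1}\hat\rho(\beta)^{\alpha/\beta}$, and for $\alpha>\beta/(\beta-1)$ (in particular already for $\beta=2,\alpha>2$) and $p>1$ one has $p^{\alpha-1}>p^{\alpha/\beta}$, so your bound strictly exceeds $p^{\alpha/\beta}\hat\rho(\beta)^{\alpha/\beta}$. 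Separate monotonicity of the $p$-factor cannot be combined with a separate geometric-mean bound on the $\hat\rho$-factor; you must interpolate the product. Indeed, writing $\gamma=\theta\alpha+(1-\theta)\beta$ and noting that $\theta\alpha/\gamma+(1-\theta)\beta/\gamma=1$, the same weight decomposition simultaneously gives
\[
p^{\alpha-\alpha/\gamma}\hat\rho(\gamma)^{\alpha/\gamma}\ \le\ \bigl[p^{\alpha-1}\hat\rho(\alpha)\bigr]^{\theta\alpha/\gamma}\,\bigl[p^{\alpha-\alpha/\beta}\hat\rho(\beta)^{\alpha/\beta}\bigr]^{(1-\theta)\beta/\gamma}\ \le\ \max\bigl\{p^{\alpha-1}\hat\rho(\alpha),\ p^{\alpha-\alpha/\beta}\hat\rho(\beta)^{\alpha/\beta}\bigr\},
\]
because the $p$-exponents interpolate exactly under the same convex combination (a short computation shows the $p$-exponent on both sides agrees). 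This is how the paper proceeds, though it packages the same fact as a calculus statement: it shows $g(x)=(1-\tfrac1x)\log p+\tfrac1x\log\hat\rho(x)$ has $g'(x)=h(x)/x^2$ with $h$ increasing (by log-convexity of $\hat\rho$), so $g$ is quasi-convex and attains its maximum on $[\beta,\alpha]$ at the endpoints. Either fix closes the gap; as written, your argument does not yield the stated (\ref{MCE2.2.4}).
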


\begin{proof}Firstly, we remark that $\hat{\rho}(t)$ exists for all $t\in[1,\alpha]$ since
$\E\|\sum\limits_{k=1}^N\hat{\textbf{A}}_k\|^\alpha<\infty$ and $\hat{\textbf{M}}(1)=\hat{\textbf{M}}$ is finite and strictly positive.
Furthermore, $\E\|\sum\limits_{k=1}^N\hat{\textbf{A}}_k\|^\alpha<\infty$ implies that $\max\limits_i\E|Y_{1,i}|^\alpha<\infty$ and $\max\limits_i\E|\hat{Y}_{1,i}^{(\beta)}|^{\alpha/\beta}<\infty$. So (\ref{MCE2.2.3}) is directly from (\ref{MCE2.2.1}). For $\alpha>2$,
 by Lemma \ref{MCL2.2}, we have
\begin{eqnarray*}
\E\left|Y_{n+1,i} -Y_{n,i} \right|^\alpha&\leq &C n^{\alpha/\beta}\left[\max\{p^{\alpha-1}\hat{\rho}(\alpha ) ,\;
p^{\frac{\beta^l-1}{\beta^l}\alpha}\hat{\rho}(\beta^l)^{\alpha/\beta^l} ,l=1,\cdots,m\}\right]^n\\
&\leq&C n^{\alpha/\beta}\left(\sup_{\beta\leq x\leq\alpha}\{p^{1-1/x}\hat{\rho}(x)^{1/x}\}\right)^{\alpha n},
\end{eqnarray*}
if $\alpha\in(\beta^m,\beta^{m+1}]$ ($m\geq 1$ is an integer). Let
$$g(x):=\log(p^{1-1/x}\hat{\rho}(x)^{1/x})=(1-\frac{1}{x})\log p+\frac{1}{x}\log\hat{\rho}(x).$$
Clearly, $g(x)$ is derivable on
$(1,\alpha)$ with derivative
$$g'(x)=\frac{h(x)}{x^2},\quad \text{where} \; h(x):=\log p+x\frac{\hat{\rho}'(x)}{\hat{\rho}(x)}-\log\hat{ \rho}(x).$$
The log-convexity of $\hat{\rho}(x)$ implies that
$h(x)$ is increasing, hence $g(x)$ reaches its maximum on a closed
interval at the extremity points. We have
$$ \sup_{\beta\leq x\leq\alpha}\{p^{1-1/x}\hat{\rho}(x)^{1/x}\}=\max\{p^{1-1/\alpha}\hat{\rho}(\alpha)^{1/\alpha},
p^{1/\beta}\hat{\rho}(\beta)^{1/\beta}\}. $$
The proof is complete.
\end{proof}

\bigskip

Now we prove Theorem \ref{MCT21}.

\begin{proof}[Proof of Theorem \ref{MCT21}]
By Lemma \ref{MCL2.3}, we can obtain the series $\sum_{n}\left(\E|Y_{n+1,i}-Y_{n,i}|^\alpha\right)^{1/\alpha}<\infty$. Observing
that
$$\left(\E|Y_{n,i}|^\alpha\right)^{1/\alpha}\leq\sum_{k=0}^{n-1}\left(\E|Y_{k+1,i}-Y_{k,i}|^\alpha \right)^{1/\alpha}+1,$$
we immediately get
$$\sup_n\E|Y_{n,i}|^\alpha\leq\left(\sum_{n=0}^{\infty}\left(\E|Y_{n+1,i}-Y_{n,i}|^\alpha \right)^{1/\alpha}+1\right)^\alpha<\infty.$$
Notice that $\E\sum\limits_n|Y_{n+1,i}-Y_{n,i}|\leq\sum\limits_{n}\left(\E|Y_{n+1,i}-Y_{n,i}|^\alpha\right)^{1/\alpha}<\infty$. This fact leads to the a.s. convergence of the series $\sum\limits_n|Y_{n+1,i}-Y_{n,i}|$, which show that $\{Y_{n,i}\}$ is a Chauchy sequence in the sense a.s., so there exists a random variable $Y_i$ such that
$Y_{n,i}\rightarrow Y_i$ a.s.. By Fatou's Lemma, we have
\begin{eqnarray*}
\E|Y_{n,i}-Y_i|^\alpha=\E\lim_{l\rightarrow\infty}|Y_{n+l,i}-Y_{n,i}|^\alpha
\leq\liminf_{l\rightarrow\infty}\E|Y_{n+l,i}-Y_{n,i}|^\alpha
\leq\left(\sum_{k=n}^\infty(\E|Y_{k+1,i}-Y_{k,i}|^\alpha )^{1/\alpha}\right)^\alpha\stackrel{n\rightarrow\infty}{\longrightarrow }0.
\end{eqnarray*}
Thus $Y_{n,i}\rightarrow Y_i$ in $L^\alpha$, so that $\E Y_i=V_i$ and $0<\E(Y_i)^\alpha<\infty$.
\end{proof}


\begin{thebibliography}{99}


\bibitem{IK}G. Alsmeyer, A. Iksanov, S. Polotsky, U. R\"osler,  Exponential rate of $L_p$-convergence
of instrinsic martingales in supercritical branching random walks. Theory Stoch. Process 15 (2009), 1-18.

\bibitem{GK}G. Alsmeyer and D. Kuhlbusch. Double martingale structure and existence of $\phi$-moments for weighted branching processes. M\"unster J. Math. 3 (2010), 163-212.

\bibitem{Barral99}J. Barral,  Moments, continuit\'e, et analyse multifractale des martingales de Mandelbrot. Probab. Theory Relat. Fields 113 (1999), 535-569.

\bibitem{Biggins77}J.D. Biggins, Martingale convergence in the branching random walk. J. Appl. Probab. 14 (1977), 25-37.

\bibitem{Biggins97}J.D. Biggins,  A.E.  Kyprianou, Seneta-Heyde norming in the branching random walk. Ann. Probab. 25 (1997), 337-360.

\bibitem{bs}J.D. Biggins, A. Rahimzadeh Sani, Convergence results
in multitype, multivariate, branching random walk. Adv. Appl. Probab. 37 (2005), 681-705.

\bibitem{b12}J.D. Biggins, Spreading speeds in reducible multitye branching random walk. Ann. Appl. Probab. 22 (2012), 1778-1821.

\bibitem{Bingham74}N.H. Bingham, R.A. Doney,  Asymptotic properties of supercritical branching processes I: The Galton-Watson processes. Adv. Appl. Prob. 6 (1974), 711-731.

\bibitem{Bingham75}N.H. Bingham, R.A. Doney, Asymptotic properties of supercritical branching processes II: Crump-Mode and Jirina processes. Adv. Appl. Prob. 7 (1975), 66-82.

\bibitem{chow}Y.S. Chow, H. Teicher,  Probability theory: Independence, Interchangeability and Martingales. Springer-Verlag, New York, 1988.

\bibitem{Durrett83}R. Durrett, T. Liggett, Fixed points of the smoothing transformation. Z. Wahrsch. verw. Gebeite 64 (1983), 275-301.

\bibitem{Guivarch90}Y. Guivarc'h,  Sur une extension de la notion de loi semi-stable. Ann. IHP 26 (1990), 261-185.



\bibitem{huang3}C. Huang, Q. Liu, Convergence in $L^p$ and its exponential rate for a branching process in a random environment. Available at http://arxiv.org/pdf/1011.0533.pdf

\bibitem{huangT}C. Huang, Th\'eor\`ems limites et vitesses de convergence pour certains processus de branchement et des marches al\'eatoires branchantes. PH. D Thesis (2010).

\bibitem{Kahane76}J.P. Kahane,  J. Peyri\`ere,  Sur certaines martingales de Benoit Mandelbrot. Adv. Math. 22 (1976), 131-145.

\bibitem{kd}J.F.C. Kingman, A convexity property of
positive matrices. Quart. Jnl. Math. Oxford. 12 (1961), 283-4.

\bibitem{ks}Y. Kyprianou, A. Rahimzadeh Sani, Martingale
convergence and the functional equation in the multy-type branching
random walk. Bernoulli 7 (2001), 593-604.

\bibitem{liu96} Q. Liu, The exact hausdorff dimension of a branching set. Probab. Theory Related Fields 104 (1996), 515-538.
\bibitem{liu1999}Q. Liu, Asymptotic properties of supercritical age-dependent branching
processes and homogeneous branching random walks.  Stoch. Proc. Appl.
82 (1999), 61-87.

\bibitem{liu2000}Q. Liu, On generalized multiplicascades.  Stoc. Proc. Appl. 86 (2000), 263-286.

\bibitem{liu2}Q. Liu, Asymptotic properties and absolute continuity of laws stable by random weighted mean.  Stoch. Proc. Appl. 95 (2001), 83-107.

\bibitem{Mandelbrot74}B. Mandelbrot,  Multiplications al\'eatoires et distributions invariantes par moyenne pond\'er\'ee al\'eatoire. C. R. Acad. Sci. Paris 278 (1974), 289-292,355-358.

\bibitem{Rosler92}U. R\"osler,  A fixed point theorem for distribution. Stoch. Proc. Appl. 42 (1992), 195-214.


\end{thebibliography}
\end{document}